\newtheorem{theorem}{Theorem}[section]
\newtheorem{prop}[theorem]{Proposition}
\newtheorem{cor}[theorem]{Corollary}
\newtheorem{conjecture}[theorem]{Conjecture}
\theoremstyle{definition}
\newtheorem{definition}[theorem]{Definition}
\newtheorem{example}[theorem]{Example}
\theoremstyle{remark}
\newtheorem*{remark}{Remark}
\newtheorem*{notation}{Notation}
\numberwithin{equation}{section}
\DeclareMathOperator{\im}{Im}
\DeclareMathOperator{\Real}{Re}
\DeclareMathOperator{\id}{id}
\DeclareMathOperator{\spann}{span}
\newcommand{\innerprod}[1]{\left\langle #1 \right\rangle}
\begin{document}

\author{Per \AA hag}
\address{Department of Mathematics and Mathematical Statistics\\ Ume\aa \ University\\ SE-901 87 Ume\aa \\ Sweden}
\email{per.ahag@umu.se}
\author{Rafa\l\ Czy{\.z}}
\address{Institute of Mathematics \\ Faculty of Mathematics and Computer Science \\ Jagiellonian University\\ \L ojasiewicza 6\\ 30-348 Krak\'ow\\ Poland}
\email{Rafal.Czyz@im.uj.edu.pl}
\author{H\aa kan Samuelsson Kalm}
\address{Mathematical Sciences \\ Chalmers University of Technology and University of Gothenburg \\
SE-412 96 \\Gothenburg \\ Sweden}
\email{hasam@chalmers.se}
\author{Aron Persson}
\address{Department of Mathematics, Uppsala University, PO Box 480, S-751 06 Uppsala, Sweden}
\email{aron.persson@math.uu.se}
\keywords{almost complex structures, complex M-polyfolds, complex structures, Hermitian metrics, K\"{a}hler structures, M-polyfolds, Riemannian metrics, scale Banach spaces,  sc-holomorphic mappings,  symplectic structures}
\subjclass[2020]{ Primary 53C56, 32Q15, 46G20, 58B99; Secondary  32Q15, 32Q60, 53C15, 58C99.}
\title[Manifold-like polyfolds as differential geometrical objects]{On manifold-like polyfolds as differential geometrical objects with applications in complex geometry}

\begin{abstract}
We argue for more widespread use of manifold-like polyfolds (M-polyfolds) as differential geometric objects. M-polyfolds possess a distinct advantage over differentiable manifolds, enabling a smooth and local change of dimension. To establish their utility, we introduce tensors and prove the existence of Riemannian metrics, symplectic structures, and almost complex structures within the M-polyfold framework. Drawing inspiration from a series of highly acclaimed papers by L\'{a}szl\'{o} Lempert, we are laying the foundation for advancing geometry and function theory in complex M-polyfolds.
\end{abstract}

\maketitle

\section{Introduction}

Expanding on Gromov's seminal work on pseudo-holomorphic curves~\cite{Gromov1985} and the subsequent decade and a half of innovative contributions from the mathematical community (see e.g.~\cite{AbbondandoloSchlenk, Floer86, Floer89, Fukaya, FukayaOno, Fukaya2020}), Eliashberg, Givental, and Hofer introduced symplectic field theory at the turn of the millennium~\cite{EliashbergGiventalHofer}. This marked the beginning of an ambitious pursuit to create a unified, abstract framework capable of encompassing diverse theories such as Gromov-Witten Theory, Floer Theory, Contact Homology, and the more general Symplectic Field Theory. In their landmark publications~\cite{hofer2007general1,hofer2009general2,hofer2009general3}, Hofer, Wysocki, and Zehnder ingeniously developed a fitting framework centered around ambient spaces known as Manifold-like polyfolds and their generalizations, polyfolds. For a contemporary and in-depth introduction to Polyfold Theory, we recommend the monograph~\cite{hofer2017polyfold}.

Manifold-like polyfolds (M-polyfolds) share the same local model philosophy as classical differentiable manifolds. However, two distinct differences set them apart. First, the concept of differentiability is generalized. Inspired by the time-shift map  (cf.\ Example~\ref{ex:scsmoothmap} below), Hofer, Wysocki, and Zehnder revisited Krein and his students' work on scale Banach spaces and integrated the crucial insight of requiring compactness of the scale embeddings (Definition~\ref{def:scBanach}). This led to the development of an appropriate notion of scale-differentiability, or sc-differentiability for short, (see Definition~\ref{def:scdiff}). Subsequently, influenced by Cartan's last theorem~\cite{Cartan1986}, Hofer, Wysocki, and Zehnder established that the charts for M-polyfolds should be 
bijections onto sc-smooth retracts instead of open subsets of 
Banach spaces (Definition~\ref{def:scsmoothretract}). This distinction endows M-polyfolds with a unique advantage over differentiable manifolds, enabling a smooth and local change of dimension, as illustrated in Figure~\ref{fig:Mpolyfold}.

The primary objective of this paper is to argue for the more widespread adop-
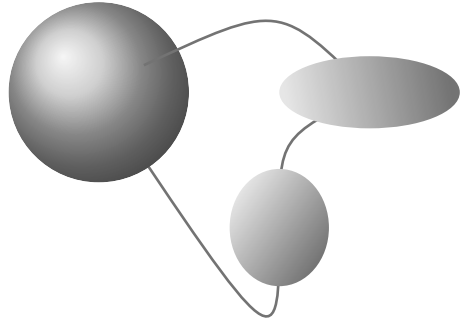
\begin{wrapfigure}{r}{6cm}

 \begin{tikzpicture}[scale=1.2]
    \draw[line width=1pt,color=gray!90!black] (3,0) .. controls(5,-3) .. (5,-1.5);
    \shade[shading=ball, ball color=gray!50] (3,0) circle (1);
    \draw[line width=1pt,color=gray!90!black] (3.5,0.3) .. controls(5,1) .. (6,0);
    \draw[line width=1pt,color=gray!90!black] (5,-1.5) .. controls(5,-0.5) .. (6,0);
    \shade[left color=gray!20,right color=gray!90!black,shading angle={100}] (6,0) ellipse (1 and 0.4);
    \shade[top color=gray!20,bottom color=gray!90!black,shading angle={55}] (5,-1.5) ellipse (0.55 and 0.65);
\end{tikzpicture}
    \caption{An M-polyfold in $\mathbb{R}^3$.}
    \label{fig:Mpolyfold}
\end{wrapfigure}
tion of M-polyfolds as differential geometric objects rather than merely as ambient spaces as initially intended. However, since the polyfold theory is still an emerging field, numerous open-ended questions remain. For instance, the general existence of an sc-smooth partition of unity in M-polyfolds is currently unknown. In this paper, we first introduce the notion of tensors and show that a strong Riemannian metric exists on M-polyfolds of the Hilbert type. We then concentrate on the fundamental class $\Gamma$ (p.~\pageref{ClassGamma}). Our main result is that  even-dimensional M-polyfolds of class $\Gamma$ have an integrable, almost complex structure $J$ and
a compatible Riemannian metric $g$ such that the associated fundamental 2-form $\omega(\cdot,\cdot):=g(J\cdot,\cdot)$ is 
a symplectic form; see Theorem~\ref{thm:almostcomplexmpoly}, Theorem~\ref{thm:nonclosedsymplectic}, and Corollary~\ref{cor:complex}.

\bigskip

In light of our commitment to broadening the applications of M-polyfolds, we shift our focus toward complex geometry. This venture is inspired by L\'{a}szl\'{o}  Lempert's groundbreaking work on complex Banach manifolds~\cite{lempert1998dolbeault, lempert1999dolbeault, lempert2000dolbeault}, which stands as our guiding star. 

\begin{wrapfigure}{l}{6cm}
    \begin{tikzpicture}[scale=1]
	 \fill[color=gray!20] (-1.5,1.5) -- (3.5,1.5) -- (3,2.5) -- (-1,2.5);
    \draw (-1,2.5) -- (3,2.5);
    \shade[left color=gray!20, right color=gray!60] (1,0.05) -- (0,0.05) arc (180:270:1 and 0.55);
    \shade[left color=gray!60, right color=gray!20] (0.95,0.05) -- (2,0.05) arc (360:270:1 and 0.55);
    \draw (0,0) arc (180:360:1 and 0.5);
    \shade[left color=gray!20, right color=gray!60] (0,0) rectangle (1,4);
    \shade[left color=gray!60, right color=gray!20] (0.95,0) rectangle (2,4);
    \filldraw[color=gray!20] (1,4) ellipse (1 and 0.5);
    \draw[color=black!100] (1,4) ellipse (1 and 0.5);
    \draw (0,0) -- (0,4);
    \draw (2,0) -- (2,4) node[anchor=west]{$C$};
    \fill[color=gray!20] (-1.5,1.5) -- (3.5,1.5) -- (4,0.5) -- (-2,0.5);
    \draw (-2,0.5) -- (4,0.5);
    \draw (-2,0.5) -- node[anchor=west]{$P$} (-1,2.5);
    \draw (4,0.5) -- (3,2.5);
    \shade[left color=gray!20, right color=gray!60] (1,1.5) -- (0,1.5) arc (180:270:1 and 0.5);
    \shade[left color=gray!60, right color=gray!20] (0.95,1.5) -- (2,1.5) arc (360:270:1 and 0.5);
    \draw (0,1.5) arc(180:360:1 and 0.5);
    \end{tikzpicture}
\caption{A projected image of the M-polyfold $X=C\cup P$ in $\mathbb{R}^3$.}\label{fig:complexC2Mpolyfold}
\end{wrapfigure}
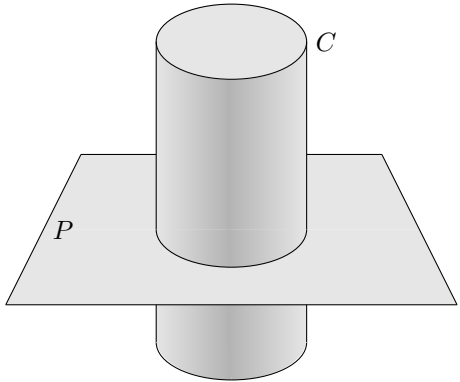

Complex geometry has been a fertile and influential field of research throughout the 20th century. During the sixties and seventies, complex geometry underwent generalization to the infinite-dimensional setting, akin to many other areas in geometry. Within this context, this paper aims to introduce complex M-polyfolds, a concept that extends the ideas of infinite-dimensional complex geometry and has potential applications in various fields of mathematics.

However, as developed in~\cite{lempert1998dolbeault, lempert1999dolbeault, lempert2000dolbeault}, certain concepts do not extend from finite-dimensional to infinite-dimensional settings, even in the relatively tranquil setting of Banach manifolds.

For instance, as outlined in~\cite{patyi2000overline}, the Newlander-Nirenberg theorem does not extend to almost complex Banach manifolds, and there is generally no isomorphism between Dolbeault cohomology and sheaf cohomology. 
However, Dolbeault isomorphism theorems have been established for complex Banach manifolds, although these results are less general than in the finite-dimensional case; see~\cite{patyisimon2009,simon2009}.

Traditionally, a complex manifold is defined as locally biholomorphic to some open subset of $\mathbb{C}^n$. Since this definition is not directly applicable to M-polyfolds, we adapt the concept of complex Banach manifolds employed in~\cite{lempert1998dolbeault} and define a complex M-polyfold as an integrable, almost complex M-polyfold
(see Definition~\ref{def:integrable}  for the notion of integrable). After introducing and examining the fundamental properties of sc-holomorphic functions in complex M-polyfolds using the $\bar{\partial}$-operator, we culminate our study by constructing the following global complex M-polyfold: Let $C=\{(z,w)\in \mathbb{C}^2: |z|<1\}$ be the open solid unit cylinder in $\mathbb{C}^2$, and $P=\{(z,w)\in \mathbb{C}^2: w=0\}$ the complex plane embedded in $\mathbb{C}^2$. The set $X=C\cup P$, along with the complex structure inherited from $\mathbb{C}^2$, constitutes a complex M-polyfold (see Figure~\ref{fig:complexC2Mpolyfold}). As a result, any open subset of $X$ is also a complex M-polyfold.

\section{A primer on M-polyfolds}\label{sec:mpolyprimer}

This section provides a concise overview of M-polyfolds, a crucial concept for understanding the rest of this paper. M-polyfolds are developed in the real setting, where all vector spaces and related concepts are over $\mathbb{R}$.

M-polyfolds can be better understood by drawing a parallel with smooth manifolds and Banach manifolds. A smooth manifold is a topological space locally modeled on Euclidean spaces, formed by seamlessly gluing together subsets of $\mathbb{R}^n$. Banach manifolds extend this notion by employing subsets of Banach spaces instead of $\mathbb{R}^n$. Analogously, M-polyfolds are constructed by coherently joining certain images of Banach space subsets while incorporating an additional structure known as the scales of the Banach space. The scales enable a more intricate analysis of the spaces involved. The gluing procedure in M-polyfolds hinges on the application of maps, referred to as retractions, which exhibit smoothness with respect to the scale structure of the Banach space. This specific type of smoothness ensures that the resulting M-polyfold possesses the desirable properties.

To facilitate the presentation, this section is divided into two parts. Firstly, we introduce the concepts of scale topology and scale Banach spaces, which are prerequisites for understanding M-polyfolds. Secondly, we present the key ideas of M-polyfolds and their applications. For more in-depth information on M-polyfolds, we refer the reader to~\cite{hofer2017polyfold}.

\subsection{Scale topology and scale Banach spaces}

To introduce the concepts of scale topology and scale Banach spaces, we first define the scale structure.

\begin{definition}
\label{def:scBanach}
Let $E$ be a Banach space, and let $\{E_m\}_{m\geq 0}$ be a decreasing sequence of Banach spaces with $E=E_0$. Then we shall say that $\{E_m\}_{m\geq 0}$ is a
\emph{scale structure}, or \emph{sc-structure} for short, on $E$ if:
\begin{enumerate}[$i)$]\itemsep2mm
    \item the inclusion operator $\iota: E_{m+1}\hookrightarrow E_{m}$ is compact, and
    \item the set
     \[
     E_\infty := \bigcap_{i=0}^{\infty}E_i\subseteq E_m
    \]
    is dense for all $m\geq0$.
\end{enumerate}
A Banach space $E$ together with an sc-structure $\{E_m\}_{m\geq0}$ is called an \emph{sc-Banach space}. Let $\norm{\cdot}_m$ denote the norm of the Banach space $E_m$. The points in $E_\infty$ are called  \emph{smooth points} and points in $E_m$ are called  \emph{points of regularity} $m$.
\end{definition}

 The only dense vector subspace of $\mathbb{R}^n$ is $\mathbb{R}^n$ itself. As a consequence, the unique sc-structure on $\mathbb{R}^n$ is the \emph{constant sc-structure}
\begin{equation}
\mathbb{R}^n = E_0 = E_1 = \cdots = E_\infty.
\end{equation}
Hence, finite-dimensional sc-Banach spaces are identical to their classical counterparts. In contrast, consider an infinite-dimensional Banach space $E$. In this case, the constant scales do not form an sc-structure on $E$. This is attributed to the fact that infinite-dimensional Banach spaces are not locally compact,
which implies that the inclusion map $\iota: E\rightarrow E$ cannot be compact.

To better understand why points in $E_m$ are referred to as points of regularity $m$, consider the following example:

\begin{example}
\label{ex:scBanachspace}
Consider the space
\[
\begin{aligned}
E=\mathcal{C}^0(\mathbb{S}^1)
\end{aligned}
\]
i.e., the space of continuous functions $f:\mathbb{S}^1\rightarrow \mathbb{R}$ with the $\sup$-norm, where $\mathbb{S}^1=\mathbb{R}/\mathbb{Z}$. It is a well-known fact that $\mathcal{C}^m(\mathbb{S}^1)$, the $m$-times continuously differentiable functions $f:\mathbb{S}^1\rightarrow \mathbb{R}$, is a Banach space for all $m\in \mathbb{N}$ with the norm
\[
\norm{f}_m = \sup \abs{f}+  \sup \abs{Df} + \dots + \sup \abs{D^m f}.
\]
Therefore, $E_m = \mathcal{C}^{m}(\mathbb{S}^1)$ defines a sequence $\{E_m\}$ of Banach subspaces of $E$. Thanks to the Arzel\`a-Ascoli theorem we have that $E$ together with the collection $\{E_m\}_{m=0}^\infty$ is an sc-structure on $E$.

\qed
\end{example}

\noindent \emph{Notation.}

\begin{itemize}\itemsep2mm

\item[$i)$] Suppose that $E$ and $F$ are sc-Banach spaces. The direct sum  $E\oplus F$ is the direct sum of the Banach spaces $E$ and $F$ equipped with the induced sc-structure $\{E_m\oplus F_m\}_{m=0}^\infty$.

\item[$ii)$]  For an sc-Banach space $E$, a subset $A\subseteq E$ inherits a \textit{filtration} $\{A_m\}_{m=0}^\infty$, $A_m= A\cap E_m$. We will let $A^k$ be the set $A_k$ together with the induced filtration $\{A_{k+m}\}_{m=0}^\infty$. Every subset of an sc-Banach space will be assumed to have the induced filtration.

\end{itemize}

We focus on sc-Banach spaces and examine continuous linear maps that preserve their added sc-structure.

\begin{definition}
\label{def:scaleoperator}
Let $E,F$ be sc-Banach spaces. A linear operator $L:E\rightarrow F$ is called an \emph{sc-operator} if $L(E_m)\subseteq F_m$, and the restriction $L\mid_{E_m}:E_m \rightarrow F_m$ is continuous for all $m\geq 0$. Furthermore, an sc-operator $L:E\rightarrow F$ is said to be an \emph{sc-isomorphism} if it is bijective and its inverse is also an sc-operator.
\end{definition}

In specific applications of M-polyfold theory, M-polyfolds with boundary and corners are required. Thus, following the construction of manifolds with boundaries and corners, the concept of partial quadrants in sc-Banach spaces becomes necessary.

\begin{definition}
Let $E$ and $W$ be sc-Banach spaces, and let $C\subseteq E$ be a closed convex subset. If there exists an sc-isomorphism $L:E\rightarrow \mathbb{R}^n \oplus W$ with $L(C)=[0,\infty)^n \oplus W$, then  $C$ is called a \emph{partial quadrant} in $E$.
\end{definition}

\begin{definition}
\label{def:openlocalmpolyfoldmodel}
Let $E$ be an sc-Banach space, let $C\subseteq E$ be a partial quadrant in $E$, and let $U \subseteq C$ be a relatively open set. Then, the tuple   $(U,C,E)$ is called an \emph{open tuple}.
\end{definition}

We will now discuss scale calculus. This is a notion of
continuity, differentiability, and smoothness that respects the sc-structure.

\begin{definition}
\label{def:scalecontinuous}
Let $(U,C,E)$, $(V,D,F)$ be open tuples. A map $f:U\rightarrow V$ is called \emph{sc}$^0$ (or of \emph{class sc}$^0$, or \emph{sc-continuous}), if $f(U_m)\subseteq V_m$ and the maps $f\mid_{U_m}:U_m\rightarrow V_m$ are continuous for all $m\geq 0$.
\end{definition}

To define differentiability, we shall first define tangent bundles of open tuples. These are analogous to tangent bundles of open subsets of Banach spaces.

\begin{definition}
The \emph{tangent bundle} of the open tuple $(U,C,E)$ is defined as the tuple
\[
T(U,C,E) = (TU,TC,TE)
\]
where
\[
TU=U^1\oplus E,\qquad TC=C^1\oplus E, \qquad TE=E^1\oplus E.
\]
\end{definition}

Observe that $TC$ is a partial quadrant in $E^1 \oplus E$. Additionally,  $TU \subset TC$ is relatively open, resulting in that $(TU, TC, TE)$ is an open tuple .

Furthermore, the tangent bundle is defined on the points of regularity $1$ for the tuple $(U, C, E)$. According to Definition~\ref{def:scBanach}, this implies that the tangent bundle is defined on a dense subset of $E$.

\begin{definition}
\label{def:scdiff}
Let $(U,C,E)$, $(V,D,F)$ be open tuples. A map $f:U\rightarrow V$ is called 
\emph{sc}$^1$-\emph{differentiable} (or of \emph{class sc}$^1$) if the following conditions hold:
\begin{enumerate}[$i)$] \itemsep2mm
    \item $f$ is sc$^0$;
    \item for every $x\in U_1$ there exists a bounded linear operator $A:E\rightarrow F$ such that, for all $h\in E_1$ satisfying $x+h\in U_1$, it holds that
    \[
    \lim_{\norm{h}_1\to 0} \frac{\norm{f(x+h)-f(x)- A(h)}_0}{\norm{h}_1} = 0.
    \]
    We denote the linear operator $A$ satisfying the above by $Df(x)$;
    \item the \emph{tangent map} $Tf:TU \rightarrow TV$ defined for all $x\in U_1$ and all $h\in E$ by
    \[
    Tf(x,h) := (f(x),Df(x)h)
    \]
    is sc$^0$ .
\end{enumerate}
For $k\geq2$, a function $f:U\rightarrow V$ is inductively defined to be \emph{sc}$^{k}$, if $f$ is sc$^{k-1}$ and the map
 \[
 T^{k-1}f:T^{k-1}U =T(T(\underset{k-1}{\dots} T(U))) \rightarrow T^{k-1}V =T(T(\underset{k-1}{\dots} T(V)))
 \]
 is sc$^1$.
Furthermore, $f$ is called \emph{sc}$^\infty$ (or \emph{sc-smooth}) if it is $sc^k$ for all $k \geq 0$.
\end{definition}

It is essential to note that $sc^k$-differentiability is defined in points of regularity $k$, that is points in $U_k\subset C_k\subset  E_k$. In finite-dimensional sc-Banach spaces, the scale-calculus reduces to the classical calculus due to the fact that
the sole sc-structure on $\mathbb{R}^n$ is the constant one. 

The concept of scale-continuity is a more restrictive notion than that of classical
continuity. On the other hand, since $\|h\|_0/\|h\|_1$ is bounded it follows from the above definition that sc$^0$-maps that are Fr\'echet differentiable are also sc$^1$. Thus, being sc$^1$ is less restrictive than being Fr\'echet differentiable if it is given that the function is sc$^0$.
Consequently, the theory of M-polyfolds does not comprehensively encompass the theory of Banach manifolds and vice versa.

The following is a fundamental example of an
sc-smooth map; see, e.g., \cite[Example~4.15]{fabert2016polyfolds}.
\begin{example}
\label{ex:scsmoothmap}
Let $\tau : \mathbb{R}\oplus \mathcal{C}^0(\mathbb{S}^1)\rightarrow \mathcal{C}^0(\mathbb{S}^1)$ be defined by $(s,\gamma) \mapsto \gamma(\cdot + s)$;
recall that $\mathbb{S}^1=\mathbb{R}/\mathbb{Z}$.
For any $\gamma_0,\gamma\in\mathcal{C}^0(\mathbb{S}^1)$ we have
\begin{equation}\label{korvar}
\tau(s_0+s,\gamma_0+\gamma)-\tau(s_0,\gamma_0)=
\gamma_0(\cdot+s_0+s)-\gamma_0(\cdot+s_0) + \gamma(\cdot+s_0+s)
\end{equation}
and it is not hard to see that the restriction of $\tau$ to
$\mathbb{R}\oplus \mathcal{C}^k(\mathbb{S}^1)$ is a continuous mapping
$\mathbb{R}\oplus \mathcal{C}^k(\mathbb{S}^1)\to\mathcal{C}^k(\mathbb{S}^1)$.
Thus $\tau$ is $sc^0$. If $\gamma_0,\gamma\in\mathcal{C}^1(\mathbb{S}^1)$,
then in view of \eqref{korvar},
$$
\tau(s_0+s,\gamma_0+\gamma)-\tau(s_0,\gamma_0)=
\gamma'_0(\cdot +s_0) s  + \gamma(\cdot +s_0)+ o(|s|+|\gamma|_{\mathcal{C}^1}).
$$
It follows that $ii)$ in Definition~\ref{def:scdiff} holds with $D\tau(s_0,\gamma_0)$
being the mapping
$$
(s,\gamma)\mapsto \gamma'_0(\cdot +s_0) s  + \gamma(\cdot +s_0).
$$
This mapping clearly extends to $\mathbb{R}\oplus \mathcal{C}^0(\mathbb{S}^1)$
and one can check that $iii)$ in Definition~\ref{def:scdiff} holds.

\qed
\end{example}

 In the previous example one runs into problems if one tries to define the classical Fr\'echet differential of $\tau$ at a point $(s_0,\gamma_0)$ when $\gamma_0$ is not
$\mathcal{C}^1(\mathbb{S}^1)$ since then $\gamma'_0$ need not even be defined.

\subsection{M-polyfolds}\label{ssecMpoly} From here on, the second half of the local theory shall be summarized. As expected, an sc-smooth version of the chain rule exists.

\begin{theorem}[{\hspace{1sp}\cite[Theorem~1.3.1]{hofer2017polyfold}}]
\label{thm:chainrulescBanach}
Let $(U,C,E)$, $(V,D,F)$, and $(W,Q,G)$ be open tuples. Furthermore, suppose that $f:U\rightarrow V$ and $g:V\rightarrow W$ are sc$^1$-maps. Then, the function $g\circ f$ is an sc$^1$-map and
\[
T(g\circ f) = Tg\circ Tf.
\]
\end{theorem}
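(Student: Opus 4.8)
The plan is to check the three conditions of Definition~\ref{def:scdiff} for the composite $g\circ f$ and to identify its differential. The sc$^0$ property is immediate: since $f$ and $g$ are sc$^0$, we have $(g\circ f)(U_m)\subseteq W_m$ and each restriction $(g\circ f)|_{U_m}=g|_{V_m}\circ f|_{U_m}$ is a composition of continuous maps. For the differential, the natural candidate at a point $x\in U_1$ is $D(g\circ f)(x):=Dg(f(x))\circ Df(x)$, a bounded linear operator $E\to G$ as the composition of the bounded operators $Df(x):E\to F$ and $Dg(f(x)):F\to G$. Granting for a moment that this candidate satisfies condition $ii)$, the tangent-map identity becomes purely formal:
\[
T(g\circ f)(x,h)=\big((g\circ f)(x),Dg(f(x))Df(x)h\big)=Tg\big(f(x),Df(x)h\big)=(Tg\circ Tf)(x,h),
\]
and condition $iii)$ follows because $Tf$ and $Tg$ are sc$^0$ (as $f,g$ are sc$^1$) and the composition of sc$^0$ maps is sc$^0$, exactly as in the sc$^0$ step. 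This simultaneously yields the claimed formula $T(g\circ f)=Tg\circ Tf$.

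The whole content therefore lies in condition $ii)$: fixing $x\in U_1$ and writing $y:=f(x)\in V_1$ and $k:=f(x+h)-f(x)\in F_1$, I must show
\[
\frac{\norm{\,g(f(x+h))-g(f(x))-Dg(y)Df(x)h\,}_0}{\norm{h}_1}\xrightarrow[\ \norm{h}_1\to 0\ ]{}0,\qquad h\in E_1,\ x+h\in U_1.
\]
The naive attempt---estimate $g(y+k)-g(y)-Dg(y)k$ by the differentiability of $g$ and then bound $\norm{k}_1/\norm{h}_1$---fails, because $f|_{U_1}\colon U_1\to F_1$ is in general only continuous and not Lipschitz: the increment $\norm{k}_1$ is controlled by a modulus of continuity rather than linearly in $\norm{h}_1$ (already visible for the time-shift map of Example~\ref{ex:scsmoothmap}, where $\norm{k}_{\mathcal{C}^1}$ involves the modulus of continuity of $\gamma_0'$). \textbf{This is the main obstacle}, and it is overcome using the compactness of the scale embeddings.

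Concretely, for $\norm{h}_1$ small the segments $[x,x+h]\subseteq U_1$ and $[y,y+k]\subseteq V_1$ (using convexity of the partial quadrants and relative openness), and the sc$^0$-continuity of $Tf,Tg$ makes the integrands below continuous into $F_0,G_0$; applying the fundamental theorem of calculus twice then gives
\[
g(f(x+h))-g(f(x))-Dg(y)Df(x)h=\int_0^1\!\!\int_0^1\big[Dg(y+tk)Df(x+sh)-Dg(y)Df(x)\big]h\,ds\,dt.
\]
Dividing by $\norm{h}_1$ and setting $\hat h:=h/\norm{h}_1$, it suffices to show
\[
\sup_{s,t\in[0,1]}\norm{\big[Dg(y+tk)Df(x+sh)-Dg(y)Df(x)\big]\hat h\,}_0\to 0
\]
as $\norm{h}_1\to 0$.

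I would argue by contradiction. Given a sequence $h_n\to 0$ in $E_1$ violating the last display, the normalized increments $\hat h_n$ lie in the unit ball of $E_1$, which is precompact in $E_0$ by Definition~\ref{def:scBanach}$\,i)$, so after passing to a subsequence $\hat h_n\to\hat h_\ast$ in $E_0$. Since $x+sh_n\to x$ in $E_1$ uniformly in $s$ and $\hat h_n\to\hat h_\ast$ in $E_0$, the points $(x+sh_n,\hat h_n)$ converge to $(x,\hat h_\ast)$ in $(TU)_0=U_1\oplus E$; uniform continuity of the sc$^0$ map $Tf$ on the resulting compact set yields $Df(x+sh_n)\hat h_n\to Df(x)\hat h_\ast$ in $F_0$, uniformly in $s$. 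As $\norm{k_n}_1\to 0$ by the sc$^0$-continuity of $f$ at level $1$, the same reasoning applied to $Tg$---now with the compact family of second arguments just produced as input---gives $Dg(y+tk_n)Df(x+sh_n)\hat h_n\to Dg(y)Df(x)\hat h_\ast$ in $G_0$, uniformly in $s,t$, so the supremum tends to $0$, the desired contradiction. The points that need care are the justification of the double application of the fundamental theorem of calculus (continuity and integrability of the integrands, which follow from the sc$^0$-continuity of the tangent maps) and the two uniform-continuity arguments, both of which rest on the compactness of the embedding $E_1\hookrightarrow E_0$.
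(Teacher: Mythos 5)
Your proof is correct and is essentially the argument the paper points to: the paper does not prove this theorem itself but cites \cite[Theorem~1.3.1]{hofer2017polyfold}, and your reduction to condition $ii)$, followed by the double fundamental-theorem-of-calculus representation, normalization of the increment, and the use of compactness of $E_1\hookrightarrow E_0$ to extract an $E_0$-convergent subsequence of $h_n/\norm{h_n}_1$ combined with the joint sc$^0$-continuity of $Tf$ and $Tg$ at level $0$, is precisely the standard Hofer--Wysocki--Zehnder proof that this citation refers to. You also correctly isolate both the purely formal part (conditions $i)$ and $iii)$ and the identity $T(g\circ f)=Tg\circ Tf$ once $ii)$ is established) and the genuine obstacle---the absence of Lipschitz control of $\norm{k}_1$ in terms of $\norm{h}_1$---and your resolution of it is sound.
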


One defines sc-smooth diffeomorphisms between open tuples in the following natural way.

\begin{definition}
Let $(U,C,E)$, $(V,D,F)$ be open tuples and let $f:U\rightarrow V$ be a bijection. If $f$ and $f^{-1}$ both are sc$^{k}$, then we call $f$ an \emph{sc$^k$-diffeomorphism}. If $f$ and $f^{-1}$ both are sc-smooth, then we call $f$ an \emph{sc-smooth diffeomorphism}.
\end{definition}

We will now introduce the local models of M-polyfolds. In the initial stages of the theory's development (see \cite{hofer2007general1,hofer2009general2,hofer2009general3}), splicing cores (see  \cite[Definition 3.2]{hofer2007general1}) were utilized as local models. However, based on the more contemporary and comprehensive notion proposed in~\cite{hofer2017polyfold,hofer2010sc, hofer2017polyfoldslecturesongeometry}, we will use sc-smooth retracts as local models. For a comparison between these two models, see \cite[Chapter 5]{fabert2016polyfolds}.

\begin{definition}
\label{def:scsmoothretraction}
Let $(U,C,E)$ be an open tuple. An sc-smooth map $r:U\rightarrow U$ is called an \emph{sc-smooth retraction} if 
\[
r\circ r = r.
\]
\end{definition}

An sc-smooth retraction generalizes a classically differentiable retraction in two ways. Firstly, differentiability is extended to sc-smoothness. Secondly, sc-smooth retractions are defined on open subsets of partial quadrants, not only on the total space as in~\cite{nadler1967differentiable} for classical retractions.

As one might expect, the image of an sc-smooth retraction is called an sc-smooth retract. However, one still keeps track of the partial quadrant and original sc-Banach space to account for the boundary behavior.

\begin{definition}
\label{def:scsmoothretract}
Let $(U,C,E)$ be an open tuple, and let $O\subseteq U$ be a subset. The tuple $(O,C,E)$ is an \emph{sc-smooth retract} if there exists an sc-smooth retraction $r:U\rightarrow U$ such that $r(U)=O$.
\end{definition}

Some results and definitions regarding sc-smooth retracts are needed before discussing the upcoming global spaces.

\begin{definition}
Let $(O,C,E)$ be an sc-smooth retract, and let $r:U\rightarrow U$, $U\subseteq C$ relatively open, be an sc-smooth retraction such that $r(U)=O$. We define the tangent space at a point $x\in O_1$ as the subspace
\[
T_x O = \im{Dr(x)} \subseteq E_0.
\]
Moreover, the \emph{tangent bundle} of $(O,C,E)$ is defined as
\[
T(O,C,E) = (TO,TC,TE),
\]
where
\[
TO=  Tr(TU), \qquad TC=C^1\oplus E, \qquad TE=E^1\oplus E.
\]
\end{definition}

The definition above is invariant under a choice of sc-smooth retraction and is thus well-posed thanks to \cite[Proposition 2.1.11]{hofer2017polyfold}. The subspace
$\im{Dr(x)}$ for $x\in U_1$  is closed in $E_0$ and thus itself is a Banach space.
We endow it with the sc-structure induced by the inclusion $\im{Dr(x)}\subset E$. Notice that $(TO,TC,TE)$ is an
sc-smooth retract of the open tuple $(TU,TC,TE)$.

\begin{definition}
\label{def:scsmooth}
Let $(U,C,E)$, $(V,D,F)$ be open tuples, let $(O,C,E)$, $(P,D,F)$ be sc-smooth retracts with $O\subset U$ and $P\subset V$. Furthermore, suppose that $r:U\rightarrow U$ is an sc-smooth retraction corresponding to $(O,C,E)$. A mapping $f:O\rightarrow P$ is called \emph{sc-smooth}, if
\[
f\circ r: U\rightarrow F
\]
is sc-smooth. Moreover, the \emph{tangent map} $Tf:TO \rightarrow TP$ is defined as
\[
Tf:= T(f\circ r)\mid_{TO}.
\]
\end{definition}

According to \cite[Proposition 2.1.13]{hofer2017polyfold}, Definition~\ref{def:scsmooth} is well-posed.

\begin{prop}[{\hspace{1sp}\cite[Proposition~2.3.1]{hofer2017polyfold}}]
\label{prop:propofopensubsetsofretracts}
Let $(O,C,E)$ be an sc-smooth retract. Then the following statements are true:
\begin{enumerate}[$i)$] \itemsep2mm
    \item if $O'\subseteq O$ is relatively open, then $(O',C,E)$ is an sc-smooth retract;
    \item let $W\subseteq O$ be a relatively open set, and let $s:W\rightarrow W$ be an sc-smooth map such that $s\circ s =s$, then for $O'=s(W)$, $(O',C,E)$ is an sc-smooth retract.
\end{enumerate}
\end{prop}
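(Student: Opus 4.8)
The plan is to construct, for each part, an explicit open tuple together with an sc-smooth retraction whose image is the prescribed set, built directly from the retraction witnessing $(O,C,E)$. So fix an open tuple $(U,C,E)$ with $U\subseteq C$ relatively open and an sc-smooth retraction $r:U\to U$ with $r(U)=O$; recall that $r$ restricts to the identity on its image $O$, since $y\in O$ means $y=r(x)$ and hence $r(y)=r(r(x))=r(x)=y$.

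For part $i)$, since $O'$ is relatively open in $O$ and $O$ carries the subspace filtration from $U$, I would write $O'=O\cap V$ for a relatively open $V\subseteq U$ and set
\[
U':=r^{-1}(V)=\{x\in U:\ r(x)\in V\}.
\]
Because $r$ is sc$^0$, each restriction $r|_{U_m}$ is continuous, so $U'\cap U_m=(r|_{U_m})^{-1}(V\cap U_m)$ is relatively open at every scale; thus $(U',C,E)$ is an open tuple, and $O'\subseteq U'$ because $r$ fixes points of $O'$. I would then check that $r':=r|_{U'}$ is the required retraction: for $x\in U'$ one has $r(x)\in O\cap V=O'\subseteq U'$ (using $r(x)\in O$ and $r(r(x))=r(x)\in V$), so $r'$ maps $U'$ into itself; it is sc-smooth as the restriction of an sc-smooth map to a relatively open subset; $r'\circ r'=r'$ follows from $r\circ r=r$; and $r'(U')=O'$ since $r$ is the identity on $O\supseteq O'$. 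This exhibits $(O',C,E)$ as an sc-smooth retract.

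For part $ii)$, I would first apply part $i)$ to realise $(W,C,E)$ as an sc-smooth retract, say via an sc-smooth retraction $r':U'\to U'$ on an open tuple $(U',C,E)$ with $r'(U')=W$. The key construction is then the self-map
\[
t:=s\circ r':U'\longrightarrow U',
\]
well defined because $r'(U')=W$, $s(W)\subseteq W$, and $W\subseteq U'$. Its sc-smoothness is precisely the statement that $s\circ r'$ is sc-smooth, which is exactly how sc-smoothness of $s$ on the retract $W$ is encoded in Definition~\ref{def:scsmooth}, so no extra argument is needed there. For idempotency, given $x\in U'$ I would use that $r'$ restricts to the identity on its image $W$ to get $r'(s(r'(x)))=s(r'(x))$, and then $s\circ s=s$ to conclude $t(t(x))=s(s(r'(x)))=s(r'(x))=t(x)$. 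Finally $t(U')=s(r'(U'))=s(W)=O'$, so $t$ is an sc-smooth retraction with image $O'$, proving the claim.

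The scalewise relative openness of $U'$ and the restriction-is-sc-smooth facts are routine. The only genuinely delicate point I anticipate is confirming in part $ii)$ that $t=s\circ r'$ is honestly sc-smooth between open tuples: this relies on reading sc-smoothness of $s$ through Definition~\ref{def:scsmooth} (and its well-posedness, \cite[Proposition~2.1.13]{hofer2017polyfold}), so that precomposing with $r'$ legitimately converts the \emph{sc-smooth-on-a-retract} map $s$ into an sc-smooth map on $U'$. The idempotency computation then rests on the clean fact that a retraction acts as the identity on its own image.
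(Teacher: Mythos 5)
Your proof is correct, and since the paper does not prove this proposition itself but simply cites \cite[Proposition~2.3.1]{hofer2017polyfold}, your argument is exactly the standard one from that source: restrict $r$ to the preimage $r^{-1}(V)$ for part $i)$, and compose $s$ with a retraction onto $W$ for part $ii)$, using the well-posedness of Definition~\ref{def:scsmooth} to get sc-smoothness of $s\circ r'$ and the fact that a retraction fixes its image to get idempotency. No gaps.
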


As indicated above, sc-smooth retracts are used as charts.

\begin{definition}
Let $X$ be a set, let $V\subseteq X$ be a subset, let $(O,C,E)$ be an sc-smooth retract, and let $\phi:V\rightarrow O$ be a bijection. The tuple $(V,\phi,(O,C,E))$ is called a \emph{chart} on $X$.
\end{definition}

It may be advantageous not to have a topology on $X$ a priori but instead let a maximal atlas of sc-smooth retracts induce a topology on $X$. To do that, we need the following definition of an sc-smooth atlas.

\begin{definition}
\label{def:scsmoothatlas}
Let $X$ be a set and let $\{(V_\alpha,\phi_\alpha,(O_\alpha,C_\alpha,E_\alpha))\}_{\alpha\in \mathcal{A}}$ be a collection of charts on $X$. The collection $\{(V_\alpha,\phi_\alpha,(O_\alpha,C_\alpha,E_\alpha))\}_{\alpha\in \mathcal{A}}$ is called an \emph{sc-smooth atlas} on $X$ if
\begin{enumerate}[$i)$] \itemsep2mm
    \item $\bigcup_{\alpha\in \mathcal{A}} V_\alpha = X$;
    \item the sets $\phi_\alpha(V_\alpha\cap V_\beta)$ are relatively open in $O_\alpha$ for all $\alpha,\beta\in\mathcal{A}$;
    \item for every two charts $(V_\alpha,\phi_\alpha,(O_\alpha,C_\alpha,E_\alpha))$ and $(V_\beta,\phi_\beta,(O_\beta,C_\beta,E_\beta))$ such that $V_\alpha \cap V_\beta \neq \emptyset$, the transition maps
    \[
    \phi_\beta\circ\phi_\alpha^{-1}\mid_{\phi_\alpha(V_\alpha\cap V_\beta)}: \phi_\alpha(V_\alpha\cap V_\beta) \rightarrow \phi_\beta(V_\alpha\cap V_\beta)
    \]
    are sc-smooth diffeomorphisms.
\end{enumerate}
\end{definition}

Since $\phi_\alpha(V_\alpha\cap V_\beta)$ are assumed relatively open,
it follows from Proposition~\ref{prop:propofopensubsetsofretracts}
that they are sc-smooth retracts. It thus makes sense to require the
transition functions to be sc-smooth diffeomorphisms.

It should be of no surprise that from Definition~\ref{def:scsmoothatlas}, one can define equivalence classes of sc-smooth atlases. One can use similar arguments as in the classical differential geometric setting to show that the following relation indeed is an equivalence relation; this hinges, though, crucially on Proposition~\ref{prop:propofopensubsetsofretracts} and Theorem~\ref{thm:chainrulescBanach}.

\begin{definition}
Two sc-smooth atlases are 
\emph{equivalent} if the union of the two atlases also is an sc-smooth atlas. The \emph{maximal} sc-smooth atlas for an sc-smooth atlas
\[
\{(V_\alpha,\phi_\alpha,(O_\alpha,C_\alpha,E_\alpha))\}_{\alpha\in \mathcal{A}}
\]
is the union of all equivalent sc-smooth atlases to $\{(V_\alpha,\phi_\alpha,(O_\alpha,C_\alpha,E_\alpha))\}_{\alpha\in \mathcal{A}}$.
Such a maximal sc-smooth atlas shall be referred to as an \emph{M-polyfold structure}.
\end{definition}

 It is now possible to define the \emph{topology induced by an atlas}, $\mathcal{A}$, on a
set $X$. This topology is generated by the collection of chart domains of the maximal sc-smooth atlas generated by $\mathcal{A}$. Analogous to classical manifolds, we assume that this topology should be paracompact and Hausdorff.

\begin{definition}
\label{def:mpolyfold}
Let $X$ be a set together with an M-polyfold structure. If the topology induced by the M-polyfold structure is paracompact and Hausdorff, then $X$ is called an \emph{M-polyfold}.
\end{definition}

In the early stages of the development of the M-polyfold theory \cite{hofer2007general1,hofer2009general2,hofer2009general3}, the paracompact axiom was replaced by second countability. Though the second countability assumption was deemed overly restrictive for applications, and as a result,  \cite{fabert2016polyfolds,hofer2017polyfold} used the paracompactness axiom instead. It is worth noting that M-polyfolds are defined as metrizable spaces in, for example, \cite[Definition 1.15]{hofer2010integrationonzerosets}. The Smirnov metrization theorem establishes the equivalence of these two definitions. For further discussion on the topology assumptions of M-polyfolds, see \cite[Remark 5.2]{fabert2016polyfolds}.

On top of the M-polyfold structure, i.e.\ sc-smooth structure, M-polyfolds inherit \emph{sc-structures} from the sc-structures of the images of its charts. Let $p\in X$ be a point and let $(V,\phi, (O,C,E))$ be a chart around $p$. Then, $p$ is said to be of scale $m$ if $\phi(p)\in O_m$. Using sc-continuity of the transition functions, it is straightforward to verify that this definition is chart-independent. By denoting $X_m$ as the set of all points in $X$ of scale $m$, we obtain a decreasing sequence:
\[
X=X_0 \supseteq X_1 \supseteq \dots \supseteq X_\infty = \bigcap_{i=0}^{\infty} X_i\, .
\]
Furthermore, $(V\cap X_m,\phi\mid_{V\cap X_m}, (O^m,C^m,E^m))$ serve as charts on $X_m$, making $X_m$ an M-polyfold as well.

Defining sc-smoothness for maps on M-polyfolds is also essential. Analogous to classical differential geometry, this is done by composing such maps with charts.

\begin{definition}
\label{def:scsmoothbetweenMpoly}
Let $X$, $Y$ be M-polyfolds and let $f:X\rightarrow Y$ be a mapping. Suppose that $p\in X$ is a point and that $(V,\phi,(O,C,E))$ is a chart around $p$. Furthermore, let $f(p) =q\in Y$, and let $(U,\psi,(P,D,F))$ be a chart around $q$. The map $f$ is \emph{sc-smooth} at $p$ if the map
\[
\psi \circ f \circ \phi^{-1}:O\rightarrow P
\]
is an sc-smooth map between the sc-smooth retracts $O$ and $P$ as in Definition~\ref{def:scsmooth}.
\end{definition}

Following \cite{fabert2016polyfolds,hofer2007general1,hofer2017polyfold}, the tangent space of an M-polyfold at a point is defined using equivalence classes of charts as follows.
Let $X$ be an M-polyfold, let $p\in X_1$ be a point, and let
\[
\mathcal{M}=\{( U_\alpha,\phi_\alpha,(O_\alpha,C_\alpha,E_\alpha))\}_{\alpha \in \mathcal{A}}
\]
be the maximal atlas that defines the M-polyfold structure for $X$. Let 
$\mathcal{A}_p$ be the set of $\alpha$ such that $p\in U_\alpha$ and let
\[
G_p :=\bigcup_{\alpha\in\mathcal{A}_p} \{p\}\times T_{\phi_\alpha (p)} O_\alpha\times \mathcal{M}.
\]
Two elements $(p,v,( U,\phi,(O,C,E))), (p,w,( V,\psi,(P,D,F)))\in G_p$ are said to be \emph{equivalent}, if
\[
v = D\left(\phi \circ \psi^{-1}\right) (\psi(p)) w.
\]
This is indeed an equivalence relation, that we denote by $\sim$. 

\begin{definition}\label{def:Tangent}
The \emph{tangent space of $X$ at the point $p$} is 
\[
T_pX = G_p/\sim.
\]
A \emph{tangent vector at $p$} is the equivalence class of a tuple
$
(p,v,(U,\phi, (O,C,E))).
$
\end{definition}

Using classical arguments, one has that the tangent space at each point is a Banach space. If $r:U\rightarrow U$ is an sc-smooth retraction corresponding to $(O,C,E)$, then $T_pX$ is isomorphic to $\im(Dr(\phi(p)))$.

Next, we give the definition of the dimension of an M-polyfold. It is crucial to observe that although an sc-Banach space $E$ might be infinite-dimensional, an sc-smooth retract $(O,C,E)$ can be finite dimensional. As a result, it is inappropriate to rely on the dimension of $E$ when defining the dimension of an M-polyfold. Instead, the dimension of the tangent space is employed.

\begin{definition}
\label{def:dimofmpolyfold}
Let $X$ be an M-polyfold. A point $p\in X_1$ is of \emph{dimension} $n\in [0,\infty]$ if $T_p X$ is an $n$-dimensional vector space. Furthermore, $X$ is called \emph{finite-dimensional} if
\[
\sup_{p\in X_1} \left(\dim T_pX \right) < \infty,
\]
and $X$ is said to be \emph{even-dimensional} (resp.\ \emph{odd-dimensional})
if for every $p\in X_1$, $\dim (T_p X) =\infty$ or $\dim(T_p X)=2k$
(resp.\ $\dim(T_p X) = 2k+1$), were  $k\in \mathbb{N}$.
\end{definition}

\begin{remark}
In standard differential geometry the tangent space at a point can be defined in several equivalent ways; either as an equivalence class of charts, as an equivalence class of curves, or as derivations of smooth functions. For finite dimensional M-polyfolds these potentially different definitions are again equivalent. However, for infinite dimensional M-polyfolds they are all inequivalent. 
\end{remark}

We now focus on the \emph{tangent bundle} of an M-polyfold. Since the tangent space is only defined on the first scale of the M-polyfold $X$, the tangent bundle must be constructed solely over $X_1$, or on a subset thereof.Consequently, the tangent bundle is given by:
\[
TX := \bigsqcup_{p\in X_1} T_p X.
\]
As proved in \cite[Proposition 2.3.15]{hofer2017polyfold}, the tangent bundle is an M-polyfold.
Let $(V,\phi,(O,C,E))$ be a chart on $X$. If $p\in V$ and $(p,v,(V,\phi,(O,C,E)))$ is
a representative of a tangent vector $v_p\in T_pX$, then we define $T_p \phi:T_pX \rightarrow T_{\phi(p)}O$ by simply setting
\[
T_p \phi(v_p): = (\phi(p),v).
\]
This is indeed well-defined. We get an induced map $T\phi\colon TV\to TO$.
Such maps give an sc-smooth atlas on $TX$.

A sc$^k$-vector field on an M-polyfold is an sc$^k$ map $\mu:X^1\rightarrow TX$, denoted $\mu\in \mathfrak{X}^k(X)$, that satisfies:
\[
\pi\circ \mu = \id_{X_1}
\]
where $\pi:TX\rightarrow X^1$ is the projection of each fibre to the base point. The space of sc-smooth vector fields is simply denoted by $\mathfrak{X}(X)$. The Lie bracket of two vector fields 
is defined as follows; the full rigor behind it can be seen in \cite[Chapter 4.2]{hofer2010integrationonzerosets}.

\begin{definition}
Let $X$ be an M-polyfold, and let $\mu,\nu : X^1 \rightarrow TX$ be two sc$^k$ vector fields. Then we define the map $[\cdot,\cdot]: \mathfrak{X}^k(X) \times \mathfrak{X}^k(X)\rightarrow \mathfrak{X}^{k-1}(X^1)$ by
\begin{equation}\label{lie}
[\mu,\nu](p) = D\mu(p) \nu(p) - D\nu(p) \mu(p),
\end{equation}
where $p\in X_2$ and $k\geq 2$.
\end{definition}

Observe that the Lie bracket raises the scale of the base M-polyfold as the Lie bracket
of $\mu,\nu \in \mathfrak{X}(X)$ is an sc-smooth mapping
$
[\mu,\nu] : X^2 \rightarrow TX^1.
$

The Whitney sum of $k$ copies of $TX$ is defined as the fiber-wise direct sum
\[
\bigoplus^k_{i=1} TX := \bigsqcup_{p\in X_1} \bigoplus^k_{i=1} T_pX.
\]
By convention the empty direct sum is  
\begin{equation}\label{convention1}
\bigoplus_{1}^{0}TX = X_1.
\end{equation}
In a similar way as for $TX$, one gets an sc-smooth atlas on
$\oplus_1^kTX$.

\begin{definition}\label{def: differential between Mpolyfolds}
Let $X,Y$ be M-polyfolds and let $f:X\rightarrow Y$ be an sc$^1$ map. Then, the map $Tf:TX\rightarrow TY$ is the map such that the diagram

\medskip

\[
\begin{tikzcd}[sep=12mm]
TV \arrow[rr, "Tf\mid_{TV}"] \arrow[d,"T\phi"] &  & TU\arrow[d,"T\psi"]\\
TO \arrow[rr,"T(\psi\circ f\mid_{V} \circ \phi^{-1})"] & & TP
\end{tikzcd}
\]

\medskip

\noindent commutes for all charts $(V,\phi,(O,C,E))$, $(U,\psi,(P,D,F))$ in $X$ and $Y$, respectively, with $f(V)\subset U$.
\end{definition}

For this paper, the notion of sc-smooth differential forms is essential. Initially, it was~\cite{hofer2010integrationonzerosets} that pioneered the theory of sc-differential forms, a natural extension from Banach manifolds to M-polyfolds.

\begin{definition}
\label{def:scdifform}
Let $X$ be an M-polyfold, and let
\[
\omega : \bigoplus_{i=1}^k TX \rightarrow \mathbb{R}
\]
be an sc-smooth map such that
$\omega_p \colon T_pX\times\cdots\times T_pX  \rightarrow \mathbb{R}$ is $k$-multilinear and alternating for all $p\in X_1$. Then, for $k\geq 1$,  $\omega$ is called an \emph{sc-differential $k$-form} on $X$. The set of all sc-differential $k$-forms on $X$ is denoted $\Omega^k(X)$. We let $\Omega^0(X)$ be the space of sc-smooth functions on $X$.
\end{definition}

If 
\[
\omega : \bigoplus_{j=1}^k TX \;\longrightarrow\; \mathbb{R}
\]
is fiberwise multilinear and, for every choice of local vector fields $\mu_1, \dots, \mu_k$, the map
\[
\omega(\mu_1,\ldots,\mu_k) : X_1 \;\longrightarrow\; \mathbb{R}
\]
is sc-smooth, then $\omega$ itself is sc-smooth. Consequently, such an $\omega$ is  an \emph{sc-differential $k$-form}.

The scales of M-polyfolds become particularly relevant when examining the theory of sc-differential forms. It is important to note that, through the inclusions of M-polyfolds $\iota:X^{i+1} \hookrightarrow X^{i}$, sc-differential forms on $X^i$ can be naturally pulled back to sc-differential forms on $X^{i+1}$. As a result, it is well-posed to consider the following directed system:
\begin{equation}\label{sladdat}
\Omega^k(X)\rightarrow \Omega^k(X^1) \rightarrow \dots \rightarrow \Omega^k(X^i) \rightarrow \Omega^k(X^{i+1})\rightarrow \dots
\end{equation}
The direct limit of this directed system is denoted as $\Omega_\infty^k (X)$. The direct sum over k-differential forms results in a graded algebra, which, in accordance with the literature, is denoted as:
\[
\Omega_\infty ^* (X) = \bigoplus_{k=0}^\infty \Omega^k_\infty(X).
\]

Next, we define the exterior derivative of sc-differential forms. 

\begin{definition}
\label{def:extdifferential}
Let $X$ be an M-polyfold. The \emph{exterior derivative}
\[
d: \Omega^k(X^i) \rightarrow \Omega^{k+1}(X^{i+1}),
\]
where $i\geq0$, is defined by for each $\omega\in \Omega^k(X^i)$ letting
\begin{multline*}
d\omega(\mu_0,\mu_1,\dots,\mu_k) = \sum_{j=0}^k (-1)^j D(\omega(\mu_0,\dots,\hat{\mu}_j,\dots, \mu_k))\mu_j\\
+ \sum_{j<l} (-1)^{j+l} \omega([\mu_j,\mu_l],\mu_0,\dots,\hat{\mu}_j,\dots,\hat{\mu}_l,\dots,\mu_k),
\end{multline*}
where $\mu_0,\mu_1,\dots,\mu_k\in \mathfrak{X}(X)$ and where $\hat{\mu}_i$ denotes that the $i$:th element is omitted.
\end{definition}

The following diagram 
\[
\begin{tikzcd}
\Omega^k(X^i) \arrow[r, "d"] \arrow[d, hook] & \Omega^{k+1}(X^{i+1}) \arrow[d, hook]\\
\Omega^k(X^{i+1}) \arrow[r, "d"]& \Omega^{k+1}(X^{i+2})
\end{tikzcd},
\]
where the vertical mappings are the pullback mappings \eqref{sladdat}, commutes for all $i\in \mathbb{N}$. It is thus possible to define the induced exterior derivative
\[
d:\Omega_\infty ^* (X) \rightarrow \Omega_\infty ^* (X).
\]
This map satisfies the classical identity $d^2 =0$. Therefore all exact differential forms are closed and therefore as seen in \cite[Definition 4.4.6]{hofer2017polyfold} there exists a de Rham cohomology on M-polyfolds.

\smallskip

We conclude this section by presenting the construction of the M-polyfold family $\Gamma$, which is crucial in this paper. The initial concept of this family can be attributed to \cite[Example 1.22]{hofer2010sc}, with a minor modification later introduced in \cite[Example 5.8]{fabert2016polyfolds} (see also~\cite{hofer2017polyfold}). In this paper, we provide a straightforward generalization of this concept.\label{ClassGamma}

\medskip

Let $\{\delta_m\}$, $\delta_m\in \mathbb{R}$, be a strictly increasing sequence with $\delta_0=0$. Furthermore, let $\beta:\mathbb{R}\rightarrow [0,1]$ be a smooth odd function such that $\beta(0)=0$ and $\beta(s)=1$ whenever $s\geq 1$. The Hilbert space $L^2(\mathbb{R})$ of real-valued functions, is equipped with the following sc-structure of weighted Sobolev spaces
\begin{equation}
\label{eq:constructionscbanachspace}
\begin{aligned}
E_m= W^{m,\delta_m} &:= \Bigg\{f \in L^2(\mathbb{R}): \\
&\int_\mathbb{R} \left(|f(s)|^2 + |D^1f(s)|^2 +\dots + |D^m f(s)|^2 \right) e^{\delta_m s \beta(s)} ds < \infty\Bigg\}.
\end{aligned}
\end{equation}
That this is indeed an sc-structure is shown in \cite[Lemma 4.10]{fabert2016polyfolds}. The main argument for compactness follows from Sobolev's embedding theorem for weighted Sobolev spaces (see, e.g.~\cite{gol2009weighted}). One defines an sc-smooth retraction on $L^2(\mathbb{R})$ as follows.
Begin by choosing $k$ compactly supported, smooth, mutually orthonormal functions
\[
\gamma_1,\dots,\gamma_k \in L^2(\mathbb{R}).
\]

For each $t\in\mathbb{R}$ let $\pi^k_t\colon L^2(\mathbb{R})\rightarrow L^2(\mathbb{R})$, be defined by $\pi_t^k=0$ if $t\leq 0$, and
\[
\pi^k_t(f) = \left\langle f,\gamma_1(\cdot + e^{\frac{1}{t}}) \right\rangle_{L^2} \gamma_1(\cdot + e^{\frac{1}{t}}) + \dots +\left\langle f,\gamma_k(\cdot + e^{\frac{1}{t}}) \right\rangle_{L^2} \gamma_k(\cdot + e^{\frac{1}{t}})
\]
if $t>0$. That is, $\pi^k_t$ is orthogonal projection on the subspace
generated by the translated functions
$\gamma_1(\cdot + e^{\frac{1}{t}}),\ldots,\gamma_k(\cdot + e^{\frac{1}{t}})$.
By slightly modifying the proof of \cite[Lemma 1.23]{hofer2010sc} it follows that  $\pi^k:\mathbb{R}\oplus L^2(\mathbb{R}) \rightarrow L^2(\mathbb{R})$, $\pi^k(t,x):= \pi^k_t(x)$, is sc-smooth. The map $r:\mathbb{R}^n\oplus \mathbb{R} \oplus L^2(\mathbb{R})\rightarrow \mathbb{R}^n\oplus \mathbb{R} \oplus L^2(\mathbb{R})$ defined by
\begin{equation}\label{lunch}
r(x_1,\dots,x_n,t,f)= (x_1,\dots,x_n,t,\pi_t^k(f))
\end{equation}
is therefore sc-smooth. Since $\pi_t^k$ is a projection it follows that
$r=r\circ r$ and so $r$ is an sc-smooth retraction. Let
\begin{equation}\label{gammank}
\Gamma_n^k = r(\mathbb{R}^n \oplus \mathbb{R}\oplus L^2(\mathbb{R})).
\end{equation}

The identity map defined on $\Gamma_n^k$ generates an sc-smooth structure on $\Gamma_n^k$, which tautologically generates a topology on $\Gamma_n^k$ that is paracompact and Hausdorff. In fact, $\Gamma_n^k$ is homeomorphic to the set
\[
\left(\mathbb{R}^n \times \mathbb{R}_{\leq 0} \times\{0\} \right) \bigcup \left( \mathbb{R}^n\times \mathbb{R}_{>0} \times \mathbb{R}^k\right)
\]
with the subspace topology inherited from the standard topology.  We notice that
$\Gamma^k_n$ is even-dimensional if and only if $n$ is an odd integer and 
$k$ is an even integer.
The class $\Gamma$ is the class of all M-polyfolds $\Gamma_n^k$.

Figure~\ref{fig:1dimplus2dim} illustrates $\Gamma^1_0$, which is homeomorphic to an open half-plane together with a line.

\begin{remark}\label{rem: Gamma}
    Note that the topology of $\Gamma^k_n$ is independent of choice of $\gamma_j$, $\delta_m$,  and $\beta$. Moreover, the sc-smooth structure is independent of $\gamma_j$. However, it is not clear to us whether the sc-smooth structure of $\Gamma^k_n$ is independent of the choice of $\delta_m$ and $\beta$. 
\end{remark}

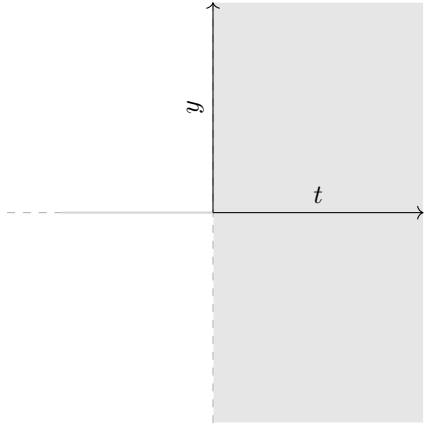
\begin{figure}[H]
\centering
\begin{tikzpicture}[scale=0.8]
\draw[color=white,fill=gray!20] (0,-3.5) rectangle (3.5,3.5);
\draw[dashed,color=gray!60] (0,-3.5)--(0,3.5);
\draw[color=gray!30, line width=0.9pt] (0,0)--(-2.5,0);
\draw[dashed, color=gray!60] (-2.5,0) -- (-3.5,0);
\draw[->] (0,0) -- (3.5,0) node[pos=.5,sloped,above] {$t$};
\draw[->] (0,0) -- (0,3.5) node[pos=.5,sloped,above] {$y$};
\end{tikzpicture}
    \caption{An illustration of $\Gamma_0^1$.}
    \label{fig:1dimplus2dim}
\end{figure}

A common way to use $\Gamma_n^k$ to construct M-polyfolds is shown in the following example.

\begin{example}
Proposition~\ref{prop:propofopensubsetsofretracts} implies that open subsets of sc-smooth retracts are sc-smooth retracts. Therefore it is now possible to model M-polyfolds using open subsets of different $\Gamma^k_n$, potentially using different $k$'s and $n$'s. An example of this type of M-polyfold is shown in Figure~\ref{fig:Mpolyfold} on page~\pageref{fig:Mpolyfold}.
\qed
\end{example}

\section{Riemannian Metrics, Symplectic and Almost Complex Structures} \label{sec:tensor}

In this section, we begin by introducing tensors and Riemannian metrics. Following this, we establish the existence of strong Riemannian metrics on certain M-polyfolds
(of the Hilbert type) before discussing symplectic and almost complex structures.

We need to be somewhat careful defining $(r,s)$-tensors on M-polyfolds. Note that the dual of an sc-Banach space does not admit a natural sc-structure and therefore, our definition of $(r,s)$-tensors does not involve any notion of cotangent bundle. Instead, we shall make use of tensor products of the tangent bundle as building blocks to define tensor fields. First, we check that the tensor product of two sc-Banach spaces has a natural sc-structure.

Let $E,F$ be Banach spaces. Let $E\otimes_{alg} F$ denote the \textit{algebraic tensor product} in the sense of vector spaces. On $E\otimes_{alg} F$ we impose the injective tensor norm, see \cite{Ryan2002introduction}, for $\omega\in E\otimes_{alg} F$
\[
\norm{\omega}_\epsilon := \sup_{\lambda\in E^*,\mu\in F^*}\left\{ (\lambda\otimes \mu)(\omega) : \norm{\lambda}_{E^*}\leq 1,\norm{\mu}_{F^*}\leq 1\right\},
\]
where we have naturally defined the tensor product of functionals in $E^*\otimes_{alg} F^*$. By $E\otimes_\epsilon F$ we denote the completion of $E\otimes_{alg} F$ under the injective norm. 

\begin{theorem}
\label{thm:tensorscbanach}
Let $E$ and $F$ be sc-Banach spaces. Then, the Banach space $E\otimes_\epsilon F$ admits the sc-structure
\[
\{E_n \otimes_\epsilon F_n\}_{n=0}^\infty.
\]
\end{theorem}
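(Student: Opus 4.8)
The plan is to verify the three defining properties of an sc-structure (Definition~\ref{def:scBanach}) for the sequence $\{E_n\otimes_\epsilon F_n\}$: that it is a decreasing tower of Banach spaces with $(E\otimes_\epsilon F)_0 = E_0\otimes_\epsilon F_0$, that the inclusions are compact, and that the intersection is dense at every level. The ground level is immediate since $E_0=E$ and $F_0=F$. The functoriality of the injective tensor product (see \cite{Ryan2002introduction}) turns the bounded inclusions $\iota_E\colon E_{n+1}\hookrightarrow E_n$ and $\iota_F\colon F_{n+1}\hookrightarrow F_n$ into a bounded operator $\iota_E\otimes_\epsilon\iota_F\colon E_{n+1}\otimes_\epsilon F_{n+1}\to E_n\otimes_\epsilon F_n$ of norm $\|\iota_E\|\,\|\iota_F\|$.

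To read this as a genuine inclusion I must check injectivity. I would use the standard identification of an element $w$ of an injective tensor product with an operator $\tilde w\colon E_{n+1}^*\to F_{n+1}$, under which $\iota_E\otimes_\epsilon\iota_F$ sends $w$ to $\iota_F\circ\tilde w\circ\iota_E^*$. A short approximation argument (finite-rank tensors are operator-norm dense, and for $\psi\in F_{n+1}^*$ the functional $\psi\circ\tilde w$ is then a norm-limit of elements of $E_{n+1}$ inside $E_{n+1}^{**}$) shows that $\tilde w$ is weak-$*$-to-weak continuous, so $\ker\tilde w$ is weak-$*$ closed. Since $\iota_E,\iota_F$ are injective, the ranges of $\iota_E^*$ and $\iota_F^*$ are weak-$*$ dense, and combined with injectivity of $\iota_F$ this forces $\tilde w=0$ whenever its image vanishes. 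Hence $\{E_n\otimes_\epsilon F_n\}$ is a decreasing tower of Banach spaces. For the density axiom, I would observe that $E_\infty\otimes_{alg}F_\infty$ sits inside $\bigcap_n(E_n\otimes_\epsilon F_n)$, and, using that $E_\infty$ is dense in each $E_n$ and $F_\infty$ in each $F_n$ together with the joint continuity of $(x,y)\mapsto x\otimes y$ into $E_n\otimes_\epsilon F_n$, that it is already dense in every $E_n\otimes_\epsilon F_n$; this yields density of the intersection at each level.

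The main obstacle is the compactness of $\iota_E\otimes_\epsilon\iota_F$. Here I would exploit the defining feature of the injective norm, namely the isometric embedding
\[
E_n\otimes_\epsilon F_n \hookrightarrow \mathcal{C}(K), \qquad K:=B_{E_n^*}\times B_{F_n^*},
\]
where $K$ carries the compact product of weak-$*$ topologies and $w$ maps to the function $(\phi,\psi)\mapsto\langle\phi\otimes\psi,w\rangle$. It then suffices, by the Arzel\`a--Ascoli theorem, to show that the image of the unit ball $B\subseteq E_{n+1}\otimes_\epsilon F_{n+1}$ is uniformly bounded and equicontinuous in $\mathcal{C}(K)$. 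Uniform boundedness is clear from the norm estimate above. For equicontinuity I would rewrite the relevant function as $(\phi,\psi)\mapsto\langle(\iota_E^*\phi)\otimes(\iota_F^*\psi),w\rangle$ and invoke the Schauder-type characterization of compactness: since $\iota_E$ and $\iota_F$ are compact, their adjoints $\iota_E^*,\iota_F^*$ carry bounded weak-$*$ convergent nets to norm-convergent nets. The bilinearity estimate $|\langle a\otimes b - a_0\otimes b_0,w\rangle|\le\|w\|_\epsilon(\|a-a_0\|\,\|b\|+\|a_0\|\,\|b-b_0\|)$, valid uniformly for $w\in B$, then supplies a common modulus of continuity for the whole family. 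Thus the image of $B$ is relatively compact in $\mathcal{C}(K)$, hence in the closed subspace $E_n\otimes_\epsilon F_n$, so $\iota_E\otimes_\epsilon\iota_F$ is compact.

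With the three axioms established, $\{E_n\otimes_\epsilon F_n\}$ is an sc-structure on $E\otimes_\epsilon F$, proving the theorem. I expect the only genuinely delicate point to be the equicontinuity step; the passage from compactness of $\iota_E,\iota_F$ to compactness of the tensor map is exactly where condition $i)$ of Definition~\ref{def:scBanach} enters, and it is worth isolating the characterization of compact operators via their adjoints as a preliminary lemma. The injectivity check, although needed to literally interpret the tower as one of subspaces, is routine once the operator picture is in place.
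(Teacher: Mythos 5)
Your proposal is correct, and its overall skeleton matches the paper's: both proofs reduce the theorem to showing that the induced maps $E_{n+1}\otimes_\epsilon F_{n+1}\to E_n\otimes_\epsilon F_n$ are injective and compact, and that the smooth points are dense at every level; your density argument (approximate the factors of a finite sum of elementary tensors by smooth points and use the cross-norm estimate) is exactly the paper's. The genuine difference is in how the two key lemmas are handled: the paper simply cites \cite[Theorem 2]{holub1972compactness} for compactness and \cite[Chapter 43]{francois} for injectivity, whereas you prove both from scratch. Your compactness argument --- the isometric embedding of $E_n\otimes_\epsilon F_n$ into $\mathcal{C}(B_{E_n^*}\times B_{F_n^*})$, equicontinuity of the image of the unit ball via the Schauder-type fact that adjoints of compact operators send bounded weak-$*$ convergent nets to norm-convergent nets, and then the general Arzel\`a--Ascoli theorem --- is essentially the standard proof of Holub's theorem, and it is sound (note the closed-subspace step at the end: relative compactness in $\mathcal{C}(K)$ passes to the closed subspace $E_n\otimes_\epsilon F_n$). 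Your injectivity argument is also correct: the identification $w\mapsto\tilde w$ is isometric on the algebraic tensor product, hence extends injectively to the completion; $\psi\circ\tilde w$ is a norm limit in $E_{n+1}^{**}$ of elements of $E_{n+1}$, which is norm-closed there, so $\tilde w$ is weak-$*$-to-weak continuous and must vanish once it vanishes on the weak-$*$ dense range of $\iota_E^*$. What each approach buys: the paper's proof is shorter and delegates the functional analysis to the literature; yours is self-contained and makes visible exactly where condition $i)$ of Definition~\ref{def:scBanach} (compactness of the scale inclusions) enters, and why injectivity --- which is not automatic for completed tensor products of merely injective (non-isometric) operators --- actually holds here.
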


\begin{proof}
What is needed is to show that the induced maps
\[
\iota \otimes_{alg} \ell : E_{m+1}\otimes_{alg} F_{m+1} \rightarrow  E_{m}\otimes_{alg} F_{m}
\]
extend to injective, compact operators
\[
\iota \otimes_{\epsilon} \ell : E_{m+1}\otimes_{\epsilon} F_{m+1} \rightarrow  E_{m}\otimes_{\epsilon} F_{m}
\]
and that the set
\[
E_{\infty}\otimes_{\epsilon}F_{\infty} := \bigcap_{i=0}^\infty E_i \otimes_{\epsilon} F_i 
\]
is dense in $E_m \otimes_{\epsilon}F_m$ for all $m\geq 0$.

That $\iota \otimes_{\epsilon} \ell$ is compact was shown in \cite[Theorem 2]{holub1972compactness}, and the fact that $\iota \otimes_{\epsilon} \ell$ is injective is a well known consequence of the injective tensor product, see \cite[Chapter 43]{francois}.

Fix $\omega \in E_m\otimes_{alg} F_m$, then we may write the finite sum $\omega = \sum a_i\otimes b_i.$ For each $a_i \in E_m$ and $b_i \in F_m$ there exist sequences $\{a_{i_k}\}_k^\infty \subseteq E_\infty$ and $\{b_{i_k}\}_k^\infty \subseteq F_\infty$ such that $\lim_{k} a_{i_k} = a_i$ and $\lim_{k} b_{i_k} = b_i$, in each respective norm. Now consider the sequence $\{ \Tilde{\omega}_k\}\subseteq E_{\infty}\otimes_{alg} F_{\infty} $ defined by
\[
\Tilde{\omega}_k = \sum_i a_{i_k} \otimes b_{i_k}.
\]
Left to show is that $\Tilde{\omega}_k$ converges to $\omega$ in the injective tensor norm on $E_m\otimes_{alg}F_m$. Suppose, without loss of generality that 
\[
0<\sum_i \norm{a_{i_k}}_{E_m} \leq \sum_i \norm{a_i}_{E_m} = A <\infty, \quad \text{ and } \quad 0<\sum_i \norm {b_i}_{F_m} =B < \infty.
\]
Moreover, for $\varepsilon>0$ arbitrary, let $K\in \mathbb{N}$ be such that 
\[
\norm{a_i - a_{i_k}}_{E_m} <\frac{\varepsilon}{2B}, \quad \text{ and } \quad \norm{b_i -b_{i_k}}_{F_m} <\frac{\varepsilon}{2A},
\]
for all $k \geq K$ and all $i$. 
Then, using that the injective tensor product is a 
cross norm, i.e. $\norm{x\otimes y}_\varepsilon = \norm{x}_{E_m}\norm{y}_{F_m}$ for all $x\in E_m$ and all $y\in F_m$, it holds that
\[
\begin{aligned}
\norm{\omega - \Tilde{\omega}_k}_\epsilon &= \norm{\sum_i a_i\otimes b_i - a_{i_k} \otimes b_{i_k}}_\epsilon\\
&\leq \norm{\sum_i a_i\otimes b_i - a_{i_k} \otimes b_{i}}_\epsilon + \norm{\sum_i a_{i_k}\otimes b_i - a_{i_k} \otimes b_{i_k}}_\epsilon\\
&= \norm{\sum_i (a_i - a_{i_k} )\otimes b_{i}}_\epsilon + \norm{\sum_i a_{i_k}\otimes (b_i - b_{i_k})}_\epsilon\\
&\leq \sum_i \norm{a_i - a_{i_k}}_{E_m} \norm{b_i}_{F_m} + \sum_i \norm{a_{i_k}}_{E_m} \norm{b_i-b_{i_k}}_{F_m}\\
&< \frac{\varepsilon}{2B} \sum_i \norm{b_i}_{F_m} + \frac{\varepsilon}{2A} \sum_i \norm{a_{i_k}}_{E_m} \leq \varepsilon
\end{aligned}
\]
for all $k \geq K$.  This concludes the proof.
\end{proof}

\begin{notation}
Henceforth, we shall simplify the notation by denoting the sc-Banach space $E \otimes_\epsilon F$ with sc-structure as in Theorem \ref{thm:tensorscbanach} by $E\otimes F$.
\end{notation}

If $X$ is an M-polyfold we let 
$$
\bigotimes_1^rTX=\bigsqcup_{p\in X_1}\underbrace{T_p X \otimes\cdots\otimes T_pX}_{r \text{ times}}
$$
which is an M-polyfold in a natural way in view of Theorem~\ref{thm:tensorscbanach}.

We will use the convention that the empty tensor product is $\mathbb{R}$, i.e.,
\begin{equation}\label{convention2}
\bigotimes_1^0TX=\mathbb{R}.
\end{equation}

When considering tensors on an M-polyfold, requiring a certain kind of reflexivity is natural. A \emph{reflexive sc-Banach space} 
is an  sc-Banach space $E$ such that $E_n$ is reflexive for all $n\geq0$. An M-polyfold modeled on reflexive sc-Banach spaces is called a \emph{reflexive M-polyfold}.

\begin{definition}
\label{def:tensorfield}
Let $X$ be a reflexive M-polyfold, and define $\mathcal{T}^r_s(X)$ to be the set of sc-smooth maps
\[
\tau:  \bigoplus_{m=1}^s TX\rightarrow \bigotimes_{n=1}^r TX 
\]
such that for each $p\in X_1$, the map
\[
\tau: \bigoplus_{m=1}^s T_p X   \rightarrow \bigotimes_{n=1}^r T_pX
\]
is an $s$-multilinear sc-continuous map.
Elements of $\mathcal{T}^r_s(X)$ are called \emph{$(r,s)$-tensor fields}.
\end{definition}

Notice that with our conventions \eqref{convention1} and \eqref{convention2}, $(1,0)$-tensor fields are vector fields and $(0,1)$-tensor fields are differential $1$-forms.

Next, we shall introduce metrics on reflexive M-polyfolds. The model spaces are Banach spaces. Therefore, there are at least two plausible definitions of semi-Riemannian metrics on M-polyfolds; weak semi-Riemannian and strong semi-Riem\-ann\-ian metrics.

\begin{definition}
\label{def:riemmetric}
Let $X$ be a reflexive M-polyfold. We define the following properties for a symmetric tensor field $g \in \mathcal{T}^0_2(X)$:

\begin{enumerate}[$(i)$]
    \item For each $p \in X_1$, if
    \[
    g_p(v_p, w_p) = 0 \quad \text{for all } w_p \in T_p X,
    \]
    then $v_p = 0$.
    \item[$(i')$] For each $p \in X_1$, the map
    \[
    v_p \longmapsto g_p(v_p, \cdot)
    \]
    is a topological vector space isomorphism from $T_p X$ to $(T_p X)^*$, where $(T_p X)^*$ is the space of continuous functionals of the zeroth level of $T_pX$.
    \item[$(ii)$] For all $p \in X_1$ and all $v_p \in T_p X$ with $v_p \neq 0$,
    \[
    g_p(v_p, v_p) > 0.
    \]
\end{enumerate}

Using these properties, we define the following types of metrics on $X$:

\begin{enumerate}[$(a)$]\itemsep2mm
    \item A \emph{weak semi-Riemannian metric} is a symmetric tensor field $g \in \mathcal{T}^0_2(X)$ that satisfies property $(i)$.
    \item A \emph{strong semi-Riemannian metric} is a symmetric tensor field $g \in \mathcal{T}^0_2(X)$ that satisfies property $(i')$.
    \item A \emph{weak Riemannian metric} is a weak semi-Riemannian metric that additionally satisfies property $(ii)$.
    \item A \emph{strong Riemannian metric} is a strong semi-Riemannian metric that additionally satisfies property $(ii)$.
\end{enumerate}
\end{definition}

An M-polyfold modeled on Hilbert spaces, meaning that the zeroth scale of the underlying sc-Banach spaces is a Hilbert space, is called a \emph{Hilbert M-polyfold}. Note that an M-polyfold of class $\Gamma$ is reflexive and Hilbert as all finite scales are Hilbert spaces. We have the following result on the existence of strong Riemannian metrics for Hilbert M-polyfolds.

\begin{theorem}
\label{thm:existriemannian}
There exists a strong Riemannian metric on a reflexive Hilbert M-polyfold.
\end{theorem}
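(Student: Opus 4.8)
The plan is to build the strong Riemannian metric locally on chart images (which are sc-smooth retracts sitting inside partial quadrants in Hilbert-type sc-Banach spaces) and then patch the local pieces together. On a single sc-smooth retract $(O,C,E)$ with retraction $r$, the tangent space $T_xO=\im Dr(x)$ is a closed subspace of the Hilbert space $E_0$, and so it inherits the ambient inner product $\langle\cdot,\cdot\rangle_{E_0}$. The first step is therefore to take, for each chart, the restriction of the ambient Hilbert inner product to the fibers $\im Dr(x)$ and to verify that $x\mapsto \langle Dr(x)\cdot, Dr(x)\cdot\rangle_{E_0}$ defines an sc-smooth, fiberwise symmetric, positive-definite $(0,2)$-tensor on $O$; positive-definiteness is immediate since the ambient inner product is positive-definite and we restrict to a subspace. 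The key point making this a \emph{strong} metric is that, because $E_0$ is Hilbert, the Riesz representation theorem guarantees that $v\mapsto\langle v,\cdot\rangle_{E_0}$ is a topological isomorphism $E_0\to E_0^*$, and this property descends to the closed subspace $\im Dr(x)$ with its induced inner product, giving property $(i')$ of Definition~\ref{def:riemmetric}.

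\textbf{Key steps in order.} First I would construct the local metrics $g^\alpha$ on each chart as above and transport them to $X$ via the charts, checking sc-smoothness of the resulting tensor field using Definition~\ref{def:tensorfield} and the chain rule (Theorem~\ref{thm:chainrulescBanach}). Second, I would invoke a partition of unity subordinate to the chart cover. Since the induced topology on $X$ is paracompact and Hausdorff by Definition~\ref{def:mpolyfold}, one obtains a \emph{continuous} partition of unity $\{\rho_\alpha\}$; the obstacle of sc-smooth partitions of unity (flagged as open in the introduction) is sidestepped because a metric only needs enough regularity to be an sc-smooth tensor, and one must argue that the $\rho_\alpha$ can be chosen sc-smooth on the relevant Hilbert model, or else restrict to the subclass where this is available. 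Third, I would form the global tensor
\[
g \;=\; \sum_{\alpha} \rho_\alpha\, g^\alpha,
\]
a locally finite sum, and verify that it is again a symmetric, sc-smooth $(0,2)$-tensor field.

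\textbf{Preserving strongness.} The delicate point is that the defining property $(i')$ — that $v_p\mapsto g_p(v_p,\cdot)$ is a \emph{topological isomorphism} onto $(T_pX)^*$ — is not obviously preserved under convex combination, since a sum of strong metrics is \emph{a priori} only weak. Here I would use that each $g^\alpha_p$ is induced from a Hilbert inner product and that a finite convex combination (with coefficients summing to $1$) of inner products each inducing the Hilbert topology on the fixed Hilbert space $T_pX$ again induces an equivalent inner-product structure: the resulting bilinear form is bounded above and, crucially, bounded below by $g_p(v_p,v_p)\ge \big(\min_{\alpha:\rho_\alpha(p)>0}\rho_\alpha(p)\big)\,\|v_p\|^2$ relative to any single $g^\alpha$, hence is itself a Hilbert inner product inducing the same topology. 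By Lax--Milgram (or directly by Riesz for the equivalent inner product), $v_p\mapsto g_p(v_p,\cdot)$ is then a topological isomorphism onto the dual, establishing $(i')$.

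\textbf{Main obstacle.} I expect the genuine difficulty to be the second step: securing a partition of unity with enough regularity to keep $g$ an sc-smooth tensor, given that the general existence of sc-smooth partitions of unity on M-polyfolds is explicitly stated to be unknown. The cleanest route is to exploit the Hilbert structure of the zeroth scale together with paracompactness to build sc-smooth bump functions on the model spaces (Hilbert spaces admit smooth bump functions), and to confirm that restricting such a bump to an sc-smooth retract and composing with charts yields an sc-smooth function on $X$; this is where the Hilbert hypothesis is truly essential and is presumably why the theorem is stated for the Hilbert (reflexive) case rather than for arbitrary M-polyfolds.
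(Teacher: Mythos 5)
Your proposal is correct and follows essentially the same route as the paper: pull back the Hilbert inner product of the model space through the chart differentials to get local sc-smooth metrics, patch with an sc-smooth partition of unity, and use the Riesz/Hilbert structure to get property $(i')$. The partition-of-unity issue you flag as the main obstacle is exactly what the paper resolves by invoking the known existence of sc-smooth partitions of unity on M-polyfolds modeled on sc-Hilbert spaces (this is precisely why the Hilbert hypothesis is in the statement), and your observation that strongness survives the locally finite convex combination is a point the paper leaves implicit.
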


\begin{proof}
Choose charts $\{(U_\alpha,\phi_\alpha, (O_\alpha,C_\alpha,E_\alpha))\}_{\alpha \in \mathcal{A}}$ such that $\bigcup_\alpha U_\alpha=X$.
 For each $q\in U_\alpha$,  let $g_\alpha:TU_\alpha \oplus TU_\alpha \rightarrow \mathbb{R}$ be defined by 
\[
g_\alpha(q)(v_q,w_q) = 
\langle D \phi_\alpha(v_q),D\phi_\alpha(w_q) \rangle_\alpha
\]
where $v_q,w_q\in T_qU_{\alpha} \cong T_qX$ and $\langle\cdot,\cdot\rangle_\alpha$ denotes the inner product of $E_\alpha$. The mappings $g_\alpha$ are sc-smooth as they are infinitely Frech\'et differentiable and sc-continuous since $\mathbb{R}$ has the constant sc-structure. Furthermore, there exists an sc-smooth partition of unity $\{\psi_\alpha\}_{\alpha \in  \mathcal{A}}$ subordinate to the covering $\{U_\alpha\}_{\alpha \in \mathcal{A}}$. Then, the mapping $g:TX\oplus TX \rightarrow \mathbb{R}$ defined by
\[
g_q(v_q,w_q) = \sum_{\alpha\in \mathcal{A}} \psi_\alpha(q) g_\alpha(q)(v_q,w_q),
\]
where $q\in X_1$ and $v_q,w_q \in T_qX$, is a strong Riemannian metric. 
\end{proof}

\begin{remark}
Similar to Banach manifolds, the requirement that the modeling space is an sc-Hilbert space is 
necessary for strong Riemannian metrics.
\end{remark}

One can transfer almost complex structures from the traditional context to the M-polyfold framework, similar to adapting Riemannian metrics for M-polyfolds.

\begin{definition}
Let $X$ be an even-dimensional reflexive M-polyfold. If there is an sc-smooth $(1,1)$-tensor field $J:TX \rightarrow TX$ such that 
$J_p^2= -\id_{T_p X}$ for all $p\in X_1$, then the pair $(X,J)$ is called an \emph{almost complex M-polyfold}.
\end{definition}

Similar to \cite[Definition 2.1]{lempert1998dolbeault}, the almost complex structure $(X,J)$ induces a complex scalar multiplication $(a+bi,v)\mapsto av+bJ_pv$ where $a,b\in \mathbb{R}$ and $v\in T_pX$. 

 The mapping $J_p$,
extended $\mathbb{C}$-linearly to $\mathbb{C}\otimes T_pX$,
has eigenvalues $\pm i$ for all $p\in X_1$. Define $T^{1,0}_pX$ and $T^{0,1}_pX$ as the eigenspaces corresponding to $i$ and $-i$, respectively. Then $\mathbb{C}\otimes T_pX=T_p^{1,0}X\oplus T_p^{0,1}X$ for all $p\in X_1$ and
\begin{align}
\varpi_{1,0}& \colon \mathbb{C}\otimes T_pX\to T_p^{1,0}X,\quad
v\mapsto (v-iJv)/2,
\label{eq:varpihol}\\     
\varpi_{0,1}&\colon \mathbb{C}\otimes T_pX\to T_p^{0,1}X,\quad
v\mapsto (v+iJv)/2    
\label{eq:varpinatihol} 
\end{align}
are the natural projections on $T_p^{1,0}X$ and $T_p^{0,1}X$,
respectively.

Let
$$
T_\mathbb{C}^kX=\bigsqcup_{p\in X_1}\bigoplus_1^k\mathbb{C}\otimes T_pX, 
$$
and let $\Omega_{\mathbb{C}}^k(X)$ be the set of sc-smooth mappings 
$T_\mathbb{C}^{k}X\rightarrow \mathbb{C}$ that are $\mathbb{C}$-multilinear and alternating on 
$\mathbb{C}\otimes T_pX\times\cdots\times \mathbb{C}\otimes T_pX$ for $p\in X_1$.

\begin{definition}
If $\omega\in\Omega_{\mathbb{C}}^k(X)$, then we say that $\omega$ has bidegree $(s,k-s)$,
$\omega\in\Omega^{s,k-s}(X)$, if it has the following property for each $p\in X_1$:
If $v_1,\ldots,v_\ell\in T^{1,0}_pX$, $v_{\ell+1},\ldots,v_k\in T_p^{0,1}X$, and $\ell\neq s$,
then $\omega(v_1,\ldots,v_k)=0$. 
\end{definition}

It follows that we have a direct sum decomposition $\Omega_{\mathbb{C}}^k(X)=\oplus_{s+t=k}\Omega^{s,t}(X)$
and we let
$$
\pi_{s,t}\colon \Omega_{\mathbb{C}}^k(X)\to \Omega^{s,t}(X)
$$ 
be the natural projections.

\begin{example}
If $\omega\in\Omega_{\mathbb{C}}^1(X)$, then
$$
\pi_{1,0}\omega(v)=\omega(\varpi_{1,0} v),\quad
\pi_{0,1}\omega(v)=\omega (\varpi_{0,1} v).
$$
If $\omega\in\Omega^2_{\mathbb{C}}(X)$, then
\begin{eqnarray*}
\pi_{2,0}\omega(v_1,v_2) &=& \omega(\varpi_{1,0} v_1,\varpi_{1,0} v_2),\\
\pi_{1,1}\omega(v_1,v_2) &=& \omega(\varpi_{1,0} v_1,\varpi_{0,1} v_2)+\omega(\varpi_{0,1} v_1,\varpi_{1,0} v_2),\\
\pi_{0,2}\omega(v_1,v_2) &=& \omega(\varpi_{0,1} v_1,\varpi_{0,1} v_2).
\end{eqnarray*}
\qed
\end{example}

 If $\omega$ is a differential $k$-form as in Definition~\ref{def:scdifform}, then we extend it
to $T_\mathbb{C}^kX$ by extending it
to a $\mathbb{C}$-multilinear alternating mapping 
$\mathbb{C}\otimes T_pX\times\cdots\times \mathbb{C}\otimes T_pX\to \mathbb{C}$ for each $p\in X_1$. With this convention, $\Omega^k(X)\subset\Omega_{\mathbb{C}}^k(X)$.

\medskip

Now, we prove the existence of an almost complex structure on an even-dimensional M-polyfold of class $\Gamma$.

\begin{theorem}
\label{thm:almostcomplexmpoly}
Let $X$ be an even-dimensional M-polyfold of class $\Gamma$. Then, there exists an almost complex structure on $X$.
\end{theorem}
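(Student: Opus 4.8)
The plan is to use that a member $X$ of the class $\Gamma$ is a model $X=\Gamma^k_n$ of \eqref{gammank}, so that $X$ carries the single global chart given by the identity and the retraction $r$ of \eqref{lunch} on $\mathbb{E}=\mathbb{R}^n\oplus\mathbb{R}\oplus L^2(\mathbb{R})$; hence $J$ can be built directly on $\mathbb{E}$ without transition maps. Since $X$ is even-dimensional, $n$ is odd and $k$ is even, so $n+1$ is even and I may fix a constant linear complex structure $J_0$ on the base $\mathbb{R}^n\oplus\mathbb{R}\cong\mathbb{R}^{n+1}$ with $J_0^2=-\id$, while (using that $k$ is even) I may fix a skew-symmetric $S\in\mathbb{R}^{k\times k}$ with $S^2=-\id$. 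The overall idea is to let $J_0$ rotate the base directions among themselves and to let $S$ rotate the fiber directions $\gamma_1(\cdot+e^{1/t}),\dots,\gamma_k(\cdot+e^{1/t})$ among themselves; the two parity hypotheses are exactly what make these rotations possible.

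First I would record the tangent data. For $q\in X$ the operator $Dr(q)\colon\mathbb{E}\to\mathbb{E}$ is a projection onto $T_qX$ (differentiate $r\circ r=r$ at the fixed point $q$), equals the identity in the $\mathbb{R}^n\oplus\mathbb{R}$ entries, and for $q=(x,t,f)$ satisfies $Dr(q)(\xi,\tau,\phi)=(\xi,\tau,\tau\,w_q+\pi^k_t(\phi))$, where $w_q$ is the $L^2$-component of $Dr(q)(0,1,0)$ and the assignment $q\mapsto Dr(q)(0,1,0)$ is sc-smooth because $Tr$ is. Thus every $u\in T_qX$ has the unique form $u=(\xi,\tau,\tau\,w_q+v)$ with $v\in V_t:=\operatorname{span}\{\gamma_j(\cdot+e^{1/t})\}$, and $V_t=\{0\}$, $w_q=0$ when $t\le 0$.

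Next I would introduce the twisted projection $\rho\colon\mathbb{R}\oplus L^2(\mathbb{R})\to L^2(\mathbb{R})$, $\rho(t,g)=\rho_t(g)$, given by $\rho_t=0$ for $t\le 0$ and
\[
\rho_t(g)=\sum_{i,j=1}^k S_{ij}\,\big\langle g,\gamma_j(\cdot+e^{1/t})\big\rangle_{L^2}\,\gamma_i(\cdot+e^{1/t}),\qquad t>0 .
\]
As $\rho$ differs from $\pi^k$ only by the constant matrix $S$ in place of the identity, it is sc-smooth by the same argument that proves $\pi^k$ sc-smooth (the modification of \cite[Lemma~1.23]{hofer2010sc}); moreover $\rho_t$ maps $V_t$ into $V_t$, kills $V_t^{\perp}$, and acts on $V_t$ as $S$ in the basis $\{\gamma_j(\cdot+e^{1/t})\}$, so $\rho_t\rho_t=-\id$ on $V_t$. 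I then define, for $u=(\xi,\tau,\tau w_q+v)\in T_qX$,
\[
J_q u := Dr(q)\big(J_0(\xi,\tau),0\big)+\big(0,0,\rho_t(v)\big).
\]
A direct computation shows $J_q(T_qX)\subseteq T_qX$ and $J_q^2=-\id$: writing $J_q u=(\xi',\tau',\tau'w_q+\rho_t(v))$ with $(\xi',\tau')=J_0(\xi,\tau)$, the fiber part extracted in the second application is $\rho_t(v)\in V_t$ (the $w_q$-tail being subtracted off), so it contributes $\rho_t\rho_t(v)=-v$, while the base part returns $J_0^2(\xi,\tau)=-(\xi,\tau)$ with matching tail $-\tau w_q$; these recombine to $-u$.

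Finally I would verify sc-smoothness of $J\colon TX\to TX$, which by Definition~\ref{def:scsmooth} reduces to sc-smoothness of $J\circ Tr$ on $T\mathbb{E}$. In the displayed formula $(\xi,\tau)$ and $u_{L^2}$ are continuous linear projections, $J_0$ and $S$ are constant, $q\mapsto w_q$ and $q\mapsto Dr(q)(J_0(\xi,\tau),0)$ are sc-smooth since $Tr$ is, and $(q,v)\mapsto\rho_t(v)$ is sc-smooth as above, so the composition is sc-smooth. The hard part will be precisely this last step across the locus $t=0$, where the fiber dimension drops from $n+1+k$ to $n+1$: one cannot frame the fiber by the individual vectors $\gamma_j(\cdot+e^{1/t})$, since these keep constant $L^2$-norm while escaping to spatial infinity and so fail to be even sc$^0$ as $t\to0^+$. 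Routing the fiber rotation through $\rho$ repairs this, transferring the delicate compensation between the decay of $\langle g,\gamma_j(\cdot+e^{1/t})\rangle$ and the growth of $\lVert\gamma_j(\cdot+e^{1/t})\rVert_m$—the same balance that makes $r$ sc-smooth—to the construction of $J$.
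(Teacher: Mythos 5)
Your proposal is correct and follows essentially the same route as the paper's proof: work in the single global chart of $\Gamma_n^k$, use the parities of $n+1$ and $k$ to rotate the base directions and the fiber directions separately, add the tail term $\tau' w_q$ (the paper's $\xi_t\,\delta x_n$) so that $J_q$ maps $T_qX$ into $T_qX$, and reduce sc-smoothness of $J$ to the same compensation argument that makes $r$ and $\pi^k$ sc-smooth. The only differences are cosmetic: you use abstract complex structures $J_0$ and $S$ where the paper writes the explicit pairings $(x_1,x_2),\dots,(x_n,t)$ and $(\gamma_{2j-1},\gamma_{2j})$, and you generate the tail automatically by applying $Dr(q)$ to $\bigl(J_0(\xi,\tau),0\bigr)$ rather than writing it out, which is a tidy touch but the same underlying formula.
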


\begin{proof}
Assume that $X$ is an even-dimensional M-polyfold of class $\Gamma$, i.e., $X=\Gamma^k_n$ with $n$ odd and $k$ even,
where $\Gamma_n^k$ is given by \eqref{gammank}. We adopt the notation in the last part of Section~\ref{ssecMpoly} used in the construction of $\Gamma_n^k$. 

Take the functions $\gamma_1,\ldots,\gamma_k\in  L^2(\mathbb{R})$ to have mutually
disjoint supports, each of which has $L^2$-norm $1$. Observe that $\gamma_j$ and $\dot{\gamma}_j$ are orthogonal in $L^2(\mathbb{R})$ since $\gamma_j\dot{\gamma}_j$
is the derivative of $\gamma_j^2/2$; recall that $\delta_0=0$, cf.\ \eqref{eq:constructionscbanachspace}. 
Since $\delta_0=0$ and $\gamma_j$ have disjoint supports, it follows that 
$\dot{\gamma}_1(\cdot+e^{1/t}),\dots,\dot{\gamma}_k(\cdot+e^{1/t}), \gamma_1(\cdot+e^{1/t}),\dots,\gamma_k(\cdot+e^{1/t})$ all are mutually orthogonal.
Let 
\begin{equation}\label{glas}
\rho_{j,t}=\frac{\dd}{\dd t}\gamma_j(\cdot+e^{1/t}).
\end{equation}
Then $\gamma_1(\cdot+e^{1/t}),\dots,\gamma_k(\cdot+e^{1/t}), \rho_{1,t},\dots,\rho_{k,t}$ all are mutually orthogonal in 
$L^2(\mathbb{R})$.

The tangent map $Tr\colon T(\mathbb{R}^n\oplus\mathbb{R}\oplus L^2(\mathbb{R}))\to T(\mathbb{R}^n\oplus\mathbb{R}\oplus L^2(\mathbb{R}))$ is given by 
\[
Tr(x_1,\dots,x_n,t,f) (\delta x_1,\dots,\delta x_n,\delta t,\delta f) = (\delta x_1,\dots, \delta x_n,\delta t,0)
\]
whenever $t\leq 0$, and a straightforward calculation shows
that
\begin{multline*}
Tr(x_1,\dots,x_n,t,f) (\delta x_1,\dots,\delta x_n,\delta t,\delta f)\\
= \Big(\delta x_1,\dots,\delta x_n,\delta t, \sum_{j=1}^k\left\langle \delta f,\gamma_j(\cdot + e^{\frac{1}{t}})\right\rangle_{L^2}\gamma_j(\cdot + e^{\frac{1}{t}}) + \left\langle f,\rho_{j,t}\right\rangle_{ L^2}\gamma_j(\cdot + e^{\frac{1}{t}})\delta t \\
+  \left\langle f,\gamma_j(\cdot + e^{\frac{1}{t}})\right\rangle_{ L^2}\rho_{j,t}\delta t \Big).
\end{multline*}
Fix $p=r(x_1,\ldots,x_n,t,f)$ in $X$.
In view of \eqref{lunch}, we may assume that 
$f\in \spann\{\gamma_1(\cdot + e^{\frac{1}{t}}),\dots,\gamma_k(\cdot + e^{\frac{1}{t}})\}$.
Then $\langle f,\rho_{j,t}\rangle=0$ for all $j$.
Since $T_pX$ is the image of $Tr(x_1,\ldots,x_n,t,f)$
we obtain that $T_pX$ is the set of all
\begin{multline*}
 \Big(\delta x_1,\dots,\delta x_n,\delta t, \sum_{j=1}^k\left\langle \delta f,\gamma_j(\cdot + e^{\frac{1}{t}})\right\rangle_{L^2}\gamma_j(\cdot + e^{\frac{1}{t}}) + \left\langle f,\gamma_j(\cdot + e^{\frac{1}{t}})\right\rangle_{L^2}\rho_{j,t}\delta t \Big),
\end{multline*}
when $\delta x_j,\delta t\in\mathbb{R}$ and 
$\delta f\in L^2(\mathbb{R})$. We see that in fact it suffices to take 
$\delta f\in \spann\{\gamma_1(\cdot + e^{\frac{1}{t}}),\dots,\gamma_k(\cdot + e^{\frac{1}{t}})\}$.
Hence, if
\begin{equation}\label{sladdaren}
\xi_t=\sum_{j=1}^k\left\langle f,\gamma_j(\cdot + e^{\frac{1}{t}})\right\rangle_{L^2}\rho_{j,t},
\end{equation}
then $T_pX$ is the set of all 
\begin{equation}\label{sladden}
(\delta x_1,\dots,\delta x_n,\delta t,\delta f + \xi_t 
\delta t)
\end{equation}
when $\delta x_j,\delta t\in\mathbb{R}$ and 
$\delta f\in \spann\{\gamma_1(\cdot + e^{\frac{1}{t}}),\dots,\gamma_k(\cdot + e^{\frac{1}{t}})\}$.
Hence, the representation $\delta f + \xi_t \delta t$ is unique, since $\delta f$ and $\xi_t$ are orthogonal. We then define $J_p\colon T_pX\to T_pX$ by
\begin{equation}\label{kota}
J_p(\delta x_1,\dots,\delta x_n,\delta t, 0) = (-\delta x_2, \delta x_1, \dots,-\delta x_{n-1},\delta x_{n-2},  -\delta t,\delta x_n,0)
\end{equation}
whenever $t\leq 0$, and
\begin{multline}\label{sladdare} 
J_p(\delta x_1,\dots,\delta x_n,\delta t,\delta f+\xi_t\delta t)\\
= \Big(-\delta x_2, \delta x_1, \dots,-\delta x_{n-1},\delta x_{n-2},  -\delta t,\delta x_n, \sum_{j=1}^{k/2}\left\langle \delta f,\gamma_{2j-1}(\cdot + e^{\frac{1}{t}}) \right\rangle_{L^2} \gamma_{2j}(\cdot + e^{\frac{1}{t}})\\
-  \left\langle \delta f,\gamma_{2j}(\cdot + e^{\frac{1}{t}}) \right\rangle_{L^2} \gamma_{2j-1}(\cdot + e^{\frac{1}{t}}) + \xi_t\delta x_n \Big)
\end{multline}
whenever $t>0$. It is straightforward to check that  $J_p^2= -\id_{T_p X}$ for all $p\in X_1$.

Let $J\colon TX\to TX$ be the mapping which fiber-wise is
$J_p$. It follows that $J$ is sc-smooth by arguments similar to those showing that $r$ is sc-smooth. Thus, $J$ is an almost
complex structure on $X$.
\end{proof}

Recall from Remark~\ref{rem: Gamma}  that two $\Gamma^k_n$ defined by different choices of $\gamma_j$ are naturally sc-diffeomorphic. Endowing both these M-polyfolds with the corresponding almost complex structure given in the proof of Theorem~\ref{thm:almostcomplexmpoly} gives two equivalent almost complex M-polyfolds.

Let us discuss how to use Theorem~\ref{thm:almostcomplexmpoly} to generate a significantly larger class of almost complex M-polyfolds. The first approach involves the sc-Banach space $E$ defined by~\eqref{eq:constructionscbanachspace}. We then derive a non-trivial infinite-dimensional, almost complex M-polyfold by considering the direct sum:
\begin{equation}
\label{eq:infiniteGamma}
(\mathbb{C}\otimes E)\oplus \Gamma^{k}_n
\end{equation}
together with the induced sc-structure. 

Secondly, M-polyfolds of class $\Gamma$ may be glued together to form more general M-polyfolds. Such M-polyfolds may be finite-dimensional or infinite-dimensional M-polyfolds by considering charts on spaces as in~\eqref{eq:infiniteGamma}. M-polyfolds that are sc-smoothly glued together using M-polyfolds of class $\Gamma$ as local models need not admit an almost complex structure. Thus, no general existence result as in Theorem~\ref{thm:almostcomplexmpoly} holds globally.

In~\cite[Section 3.4]{hofer2010sc}, the Cauchy-Riemann operators, $\partial$ and $\bar{\partial}$, were utilized on a specific M-polyfold bundle. In this context, we aim to define these operators on an almost complex M-polyfold using the classical approach.

\begin{definition}\label{def:dbar}
Let $X$ be an almost complex M-polyfold. First consider the complexified exterior derivative
\[
d_\mathbb{C}: \Omega^k_{\mathbb{C}}(X)\rightarrow \Omega^{k+1}_{\mathbb{C}}(X^1)
\]
restricted to the subspace $\Omega^{p,q}(X)\subset \Omega^k_{\mathbb{C}}(X)$. As before, let 
\[
\pi_{p+1,q}:\Omega^{k+1}_{\mathbb{C}} (X^1)\rightarrow \Omega^{p+1,q}(X^1),
\]
and
\[
\pi_{p,q+1}:\Omega^{k+1}_{\mathbb{C}} (X^1)\rightarrow \Omega^{p,q+1}(X^1)
\]
be the natural projections. Define the maps
\[
\partial : \Omega^{p,q} (X^1) \rightarrow \Omega^{p+1,q}(X^1),\qquad  \partial =\pi_{p+1,q} \circ d_\mathbb{C},
\]
and
    \[
\bar{\partial}: \Omega^{p,q}(X) \rightarrow \Omega^{p,q+1}(X^1), \qquad  \bar{\partial} =\pi_{p,q+1} \circ d_\mathbb{C}.
\]
\end{definition}

\begin{example}\label{d-exempel}
Let $(X,J)$ be an almost complex M-polyfold. If $f$ is an sc-smooth function (real- or complex-valued) on $X$, then
\begin{align*}
(\bar\partial f)_p (v)& =\frac{\big((\dd_{\mathbb{C}}f)_p (v) + i (\dd_{\mathbb{C}}f)_p(Jv)\big)}{2},\; \text{ and }\\
(\partial f)_p (v)&=\frac{\big((\dd_{\mathbb{C}}f)_p (v) - i( \dd_{\mathbb{C}}f)_p(Jv)\big)}{2}
\end{align*}
for all $p\in X_1$ and all $v\in T_pX$.
\qed
\end{example}

Let $(X,J)$ be an almost complex M-polyfold. A weak (resp.\  strong) Riemannian metric $g$ defined on $X$ is said to be \emph{compatible} with the almost complex structure $J$ if the following equation holds for all tangent vectors
$v_p, w_p \in T_pX$, and all $p\in X_1$: 
\[
g_p(v_p,w_p) =g_p(Jv_p,Jw_p).
\]
For any arbitrary Riemannian metric $g$ on $(X, J)$, we can construct a compatible Riemannian metric, denoted as $\widetilde{g}$, by letting
\[
\widetilde{g}_p(v_p,w_p) = g_p(v_p,w_p) + g_p(Jv_p,Jw_p), 
\]
for all $v_p, w_p \in T_p X$, and all $p\in X_1$. 

The induced standard inner product from the  direct sum $\mathbb{R}^n\oplus \mathbb{R}\oplus L ^2(\mathbb{R})$ is not compatible with the almost complex structure constructed in Theorem~\ref{thm:almostcomplexmpoly}. 

In the literature, a compatible Riemannian metric is sometimes referred to as an \emph{almost Hermitian metric}. However, we reserve this terminology for the corresponding complex-valued sesquilinear metric.

\begin{definition}\label{DEF: Almost Hermitian Metric}
Let $(X, J)$ be an almost complex M-polyfold. Let $h: TX \oplus TX \to \mathbb{C}$ be a sc-smooth $(0,2)$-tensor field such that for all $v_p, w_p \in T_p X$, $p \in X_1$, and all $a, b \in \mathbb{C}$:
\begin{enumerate}[$(i)$]\itemsep2mm
    \item $h_p(av_p, w_p) = a h_p(v_p, w_p)$ and $h_p(v_p, bw_p) = \bar{b} h_p(v_p, w_p)$;
    \item $h_p(v_p, w_p) = \overline{h_p(w_p, v_p)}$;
    \item $h_p(v_p, w_p) = h_p(Jv_p, Jw_p)$.
\end{enumerate}
If, in addition, 
\begin{enumerate}[$(iv)$]
    \item $h_p(v_p, v_p) > 0$ for all non-zero $v_p \in T_p X$,
\end{enumerate} 
then $h$ is called a \emph{weak almost Hermitian metric}. Furthermore, if additionally 

\begin{enumerate}[$(iv')$]
    \item the map $v_p \longmapsto h_p(v_p, \cdot)$ is a topological vector space isomorphism from $T_p X$ to $\mathbb{C}\otimes (T_p X)^*$
    for all $p \in X_1$,
\end{enumerate}
then $h$ is called a \emph{strong almost Hermitian metric}. An almost complex M-polyfold $(X,J)$ equipped with a weak (resp.\ strong) almost Hermitian metric $h$ is called a \emph{weak (resp.\ strong) almost Hermitian M-polyfold}. 
\end{definition}

As in classical almost complex geometry there is the corresponding fundamental 2-form associated to an almost complex structure with compatible Riemannian metric (see e.g. \cite{falcitelli1994almost}).

\begin{definition}
\label{def:fundamental2form}
Consider an almost complex M-polyfold $(X, J)$ equipped with a compatible weak (resp.\ strong) Riemannian metric $g$. The \emph{fundamental form induced by $(J,g)$} is the sc-smooth 2-form $\omega \in \Omega^2(X)$ defined by
\[
\omega_p(v_p, w_p) := g_p(Jv_p, w_p)
\]
for all  $v_p, w_p \in T_p X$ at each  $p \in X_1$. If $\omega$ is closed, it is called the \emph{weak (resp.\ strong) symplectic form induced by $(J,g)$}.
\end{definition}

Symplectic forms may be defined independently of any Riemannian metric and almost complex structure.  We say that a \emph{weak symplectic form} on an M-polyfold $X$ is a closed, sc-smooth 2-form $\omega \in \Omega^2(X)$ such that the map $v_p \mapsto \omega_p(v_p, \cdot)$ is injective for each $v_p \in T_p X$, $p \in X_1$. By contrast, a \emph{strong symplectic form} requires this map to be a topological vector space isomorphism from $T_p X$ to $(T_p X)^*$ at each $p \in X_1$. However, this paper derives the symplectic form $\omega$ from a compatible Riemannian metric $g$, as it will be needed in Theorem~\ref{thm:nonclosedsymplectic}.

It is worth noting that an almost complex M-polyfold $(X,J)$ equipped with a compatible metric $g$ allows the construction of an almost Hermitian metric. Specifically, one can define the $\mathbb{C}$-valued $(0,2)$-tensor field $h$ by the expression:
\[
h_p(v_p,w_p) = g_p(v_p,w_p) + i\omega_p(v_p,w_p),
\]
where $\omega$ denotes the fundamental form induced by $(J,g)$.

Moving on, we shall demonstrate the existence of symplectic forms on M-polyfolds of class $\Gamma$ with even dimensions.

\begin{theorem}
\label{thm:nonclosedsymplectic}
Let $X$ be an even-dimensional M-polyfold of class $\Gamma$. There exists an almost complex structure $J$ and a compatible strong Riemannian metric on $(X,J)$ such that the induced fundamental form is a symplectic form.
\end{theorem}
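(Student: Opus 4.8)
The plan is to keep the almost complex structure $J$ produced in the proof of Theorem~\ref{thm:almostcomplexmpoly} and to equip $X=\Gamma^k_n$ (with $n$ odd, $k$ even) with a compatible metric whose fundamental form has \emph{constant coefficients} in the natural coordinates; constancy makes closedness immediate. I retain all notation from that proof: the functions $\gamma_j(\cdot+e^{1/t})$, the vectors $\rho_{j,t}$ of~\eqref{glas}, and $\xi_t$ of~\eqref{sladdaren}. Recall from~\eqref{sladden} that for $t>0$ a tangent vector $v\in T_pX$ is uniquely encoded by reals $\delta x_1,\dots,\delta x_n,\delta t$ and by the fibre coefficients $\delta c_j:=\langle\delta f,\gamma_j(\cdot+e^{1/t})\rangle_{L^2}$, $j=1,\dots,k$; for $t\le 0$ only the $\delta x_i,\delta t$ survive.

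First I would define the metric. For $v,w\in T_pX$ set
\[
g_p(v,w)=\langle v_{\mathrm{b}},w_{\mathrm{b}}\rangle_{\mathbb{R}^{n+1}}
+\big\langle \pi^k_t(v_{\mathrm{f}}),\,\pi^k_t(w_{\mathrm{f}})\big\rangle_{L^2},
\]
where $v_{\mathrm{b}}\in\mathbb{R}^{n+1}$ and $v_{\mathrm{f}}\in L^2(\mathbb{R})$ denote the base and fibre parts of $v$. Because $\xi_t$ is orthogonal to each $\gamma_j(\cdot+e^{1/t})$, the projection annihilates the $\xi_t\,\delta t$ term, so in the coordinates above $g_p(v,w)=\sum_i\delta x_i\delta x_i'+\delta t\,\delta t'+\sum_j\delta c_j\delta c_j'$. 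I would then record three facts. (i) $g$ is sc-smooth: it is assembled from the bounded bilinear (hence sc-smooth) $L^2$-pairing and from the map $\pi^k$, which is sc-smooth by the construction of $\Gamma^k_n$. (ii) $g$ is a \emph{strong} Riemannian metric: each $T_pX$ is finite dimensional and $g_p(v,v)$ is the sum of the squares of the coordinates of $v$, hence positive definite, and in finite dimensions a positive-definite form induces a topological isomorphism onto the dual. (iii) $g$ is $J$-compatible: by~\eqref{kota} and~\eqref{sladdare} the operator $J$ acts on the coordinates $(\delta x_i,\delta t,\delta c_j)$ as the signed permutation pairing $x_{2j-1}\!\leftrightarrow\! x_{2j}$, $x_n\!\leftrightarrow\! t$, and $c_{2j-1}\!\leftrightarrow\! c_{2j}$, which preserves the sum of squares; polarization then gives $g_p(J_pv,J_pw)=g_p(v,w)$. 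Note that this is genuinely different from the symmetrized ambient $L^2$-metric, whose $tt$-coefficient carries an extra factor $\|\xi_t\|^2$ and whose fundamental form is \emph{not} closed; the role of $\pi^k_t$ is precisely to remove that factor.

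Next I would compute the fundamental form $\omega_p(v,w)=g_p(J_pv,w)$. On $\{t>0\}$ the functions $c_j(p)=\langle f,\gamma_j(\cdot+e^{1/t})\rangle_{L^2}$ are sc-smooth, and a chain-rule computation—using $\langle f,\rho_{j,t}\rangle=0$ and $\langle\xi_t,\gamma_j(\cdot+e^{1/t})\rangle=0$—shows $dc_j(v)=\delta c_j$, so that $\{dx_1,\dots,dx_n,dt,dc_1,\dots,dc_k\}$ is the dual coframe. Feeding the coordinate description of $J$ into $g_p(J_pv,w)$ yields
\[
\omega=\sum_{j=1}^{(n-1)/2}dx_{2j-1}\wedge dx_{2j}+dx_n\wedge dt+\sum_{j=1}^{k/2}dc_{2j-1}\wedge dc_{2j}
\]
on $\{t>0\}$, and the same expression without the last sum on $\{t<0\}$. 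Being a sum of wedges of exact $1$-forms, $\omega$ is closed on the open set $\{t\neq 0\}$, where each stratum is an ordinary finite-dimensional manifold on which the sc-exterior derivative reduces to the classical one.

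Finally I would propagate closedness across the slice $\{t=0\}$, which I expect to be the only delicate point, since there the fibre dimension drops from $k$ to $0$. Since $g$ and $J$ are sc-smooth, $\omega\in\Omega^2(X)$, and hence $d\omega$ is an sc-smooth $3$-form; as it vanishes on the dense set $\{t\neq0\}$, continuity forces $d\omega=0$ on all of $X_1$. Non-degeneracy is free: $\omega_p(v,\cdot)=g_p(J_pv,\cdot)$ is the composite of the isomorphism $J_p$ with the isomorphism $v\mapsto g_p(v,\cdot)$ supplied by strongness of $g$, so $\omega$ is in fact a strong symplectic form. The two things one must verify at $t=0$ are precisely that $g$ remains sc-smooth there (this is exactly the sc-smoothness of $\pi^k$ at $t=0$ built into the definition of $\Gamma^k_n$) and that the coordinate computation of $d\omega$, valid only off the slice, does propagate to it—both handled by the continuity argument just given.
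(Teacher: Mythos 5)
Your proposal is correct, and the structure $(J,g)$ you build is literally the same as the paper's: your projected metric $g_p(v,w)=\langle v_{\mathrm{b}},w_{\mathrm{b}}\rangle+\langle\pi^k_t v_{\mathrm{f}},\pi^k_t w_{\mathrm{f}}\rangle$ coincides with the metric the paper defines by declaring the frame $\mu_1,\dots,\mu_{n+k+1}$ (the coordinate fields $\partial_{x_i},\partial_t+\xi_t,\gamma_j(\cdot+e^{1/t})$) orthonormal, and your signed-permutation compatibility check is the paper's identity $J\mu_\ell=\pm\mu_{\ell\pm1}$ in coordinates. Where you genuinely diverge is the closedness step. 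The paper works pointwise everywhere, including on the seam $\{t=0\}$: it shows $\omega(\mu_j,\mu_\ell)$ is constant and proves $[\mu_i,\mu_\ell]=0$ at every point by computing the differentials $D\mu_\ell$ explicitly (the key cancellation being $D\mu_{n+1}.\mu_{n+1+j}=D\mu_{n+1+j}.\mu_{n+1}=(0,\dots,0,\rho_{j,t})$), and then invokes the Koszul formula of Definition~\ref{def:extdifferential}. You instead do the classical constant-coefficient computation on the open strata $\{t\neq 0\}$ — which is valid, since there the points are smooth, the tangent spaces have locally constant finite dimension, and your coframe identity $dc_j(v)=\delta c_j$ follows from exactly the two orthogonality facts you cite — and then propagate across $\{t=0\}$ by sc-continuity of $d\omega$. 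This continuity argument does close the gap: $d\omega\in\Omega^3(X^1)$ is by the general theory a well-defined (tensorial) sc-smooth, hence level-wise continuous, map on the Whitney sum, and since tangent spaces over $\{t\le 0\}$ are constantly $\mathbb{R}^{n+1}\oplus\{0\}$, every configuration over $\{t=0\}$ is a limit of configurations over $\{t<0\}$ on which $d\omega$ vanishes; you should state this tangent-space matching explicitly, as it is the only thing that makes ``density'' legitimate in the total space of $\bigoplus^3 TX^1$ rather than merely in the base. What the two routes buy: yours trades the Lie-bracket computation at the seam for a soft tensoriality-plus-continuity argument, which is shorter and arguably cleaner; the paper's harder computation, however, produces the identity $[\mu_i,\mu_\ell]=0$ (its equation~\eqref{claim}) as a byproduct, and that identity is reused verbatim in the proof of Corollary~\ref{cor:complex} to get $N_J=0$ and hence integrability — so with your proof, Corollary~\ref{cor:complex} would still require essentially the paper's bracket computation (or a separate holomorphic-coordinate argument) to be carried out.
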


\begin{proof}
We continue with the notation from the proof of Theorem~\ref{thm:almostcomplexmpoly}. In particular,
$X=\Gamma_n^k$ and $J$ is the almost complex structure defined by \eqref{sladdare}.
Consider a point $p=r(x_1,\ldots,x_n,t,f)$ in $X$.  Recall that if $t>0$, then the tangent space $T_pX$ is the set of all 
$$
\big(\delta x_1,\ldots,\delta x_n,\delta t, \delta f + \xi(t,f)\delta t\big)
$$
when $\delta x_j,\delta t\in\mathbb{R}$ and 
$\delta f\in \spann\{\gamma_1(\cdot + e^{\frac{1}{t}}),\dots,\gamma_k(\cdot + e^{\frac{1}{t}})\}$, and where
$\xi(t,f)=\xi_t$ is given by \eqref{sladdaren}. Recall also that $\xi(t,f)$ is $L^2$-orthogonal to each 
$\gamma_j(\cdot + e^{\frac{1}{t}})$. If $t\leq 0$, then $T_pX$ is the set of all
$(\delta x_1,\ldots,\delta x_n,\delta t, 0)$.

Define the sc-smooth vector fields  $\mu_1,\ldots,\mu_{n+k+1}$ on $X$ as follows. For $1\leq \ell\leq n$, 
\begin{equation}\label{mu}
\mu_\ell=(0,\ldots 0,1,0,\ldots,0),
\end{equation}
where $1$ is in the $\ell$th slot,
\begin{equation}\label{muu}
\mu_{n+1}=
\begin{cases}
(0,\ldots,0,1,0), & t\leq 0 \\
\big(0,\ldots,0,1,\xi(t,f)\big), & t>0
\end{cases},
\end{equation}
and
\begin{equation}\label{muuu}
\mu_{n+1+j}=
\begin{cases}
\big(0,\dots,0,0\big), & t\leq 0 \\
\big(0,\ldots,0,\gamma_j(\cdot+e^{1/t})\big), & t>0
\end{cases}
\end{equation} 
for $j=1,\ldots,k$. It follows that $\{\mu_\ell(p)\}_\ell$ is a bases for $T_pX$, and $\{\mu_\ell\}_\ell$ is a frame field for $TX$. We define a Riemannian metric $g_p$ on $T_pX$ declaring that $\{\mu_\ell(p)\}$ is orthonormal. Explicitly this Riemannian metric is given by 
\begin{multline*}
g_p\big((\delta x_1,\dots,\delta x_n,\delta t,\delta f),(\delta y_1,\dots,\delta y_n,\delta s,\delta \tilde{f})\big) \\
=\sum_{i=1}^n \delta x_i \delta y_i + \delta t\delta s + \sum_{j=1}^k \innerprod{\delta f, \gamma_j(\cdot + e^{\frac{1}{t}})}_{L^2}\innerprod{\delta \tilde{f}, \gamma_j(\cdot + e^{\frac{1}{t}})}_{L^2}.
\end{multline*}

Next, we check that $g$ is compatible with $J$. A direct calculation, using \eqref{sladdare}, shows that 
\begin{equation}\label{bula}
J\mu_\ell=
\begin{cases}
\mu_{\ell+1}, & \ell\,\,\text{odd} \\
-\mu_{\ell-1}, & \ell\,\,\text{even}
\end{cases}.
\end{equation}
Hence, $g(J\mu_j,J\mu_\ell)=g(\pm\mu_{j\pm 1},\pm\mu_{\ell\pm 1})=\delta_{j \ell}=g(\mu_j,\mu_\ell)$. 
Since $\{\mu_\ell\}$ is a frame field, we conclude that $g$ is compatible with $J$.

To complete the proof, we now show that the fundamental $2$-form $\omega$ induced by $(J,g)$ is $d$-closed. 
Because $\{\mu_\ell\}$ is a frame field, it suffices to check that $d\omega (\mu_i,\mu_j,\mu_\ell)=0$.
From~\eqref{bula} and $\omega(\mu_j,\mu_\ell)=g(J\mu_j,\mu_\ell)$, together with  $g(\mu_{j},\mu_\ell)=\delta_{j\ell}$ 
we see that $\omega(\mu_j,\mu_\ell)$ is constant (taking values in $\{0,1,-1\}$). Hence, $D \omega(\mu_j,\mu_\ell)=0.$ 
We claim that 
\begin{equation}\label{claim}
[\mu_i,\mu_\ell]=0.
\end{equation}
Assuming this for the moment, we deduce 
$d\omega (\mu_i,\mu_j,\mu_\ell)=0$ in view of Definition~\ref{def:extdifferential}.

It remains to prove the claim. From~\eqref{mu},~\eqref{muu}, and~\eqref{muuu} it is clear that $D\mu_\ell=0$
whenever $t\leq 0$. For $t>0$ a straightforward calculation shows that $D\mu_\ell=0$ when $1\leq\ell\leq n$,
\begin{equation}\label{hustak}
D\mu_{n+1} :
(\delta x_1,\ldots,\delta x_n,\delta t,\delta f+\xi(t,f)\delta t)\mapsto
\left(0,\ldots,0,\xi(t,\delta f)+\frac{\dd\xi}{\dd t}(t,f)\delta t\right),
\end{equation}
and
$$
D\mu_{n+1+j} : (\delta x_1,\ldots,\delta x_n,\delta t,\delta f+\xi(t,f)\delta t)\mapsto
(0,\ldots,0,\rho_{j,t}\delta t)
$$
when $1\leq j\leq k$, where $\rho_{j,t}$ is given by \eqref{glas}.
In \eqref{hustak}, we used the linearity of $\xi(t,f)$ in $f$ and the fact that  $\xi(t,\xi(t,f))=0$.
It follows that
$$
D\mu_{n+1} . \mu_{n+1+j}=\big(0,\ldots,0,\xi(t,\gamma_j(\cdot+e^{1/t}))\big)
=(0,\ldots,0,\rho_{j,t})
$$
and
$$
D\mu_{n+1+j} . \mu_{n+1} = (0,\ldots,0,\rho_{j,t})
$$
for $j=1,\ldots,k$. 
Hence, by \eqref{lie}, we conclude that
$$
[\mu_{n+1+j}, \mu_{n+1}] \;=\; 0,
$$
which proves the claim for these indices.
The remaining cases follow directly from the expressions for $D\mu_\ell$. This settles the claim, and thus completes the proof.
\end{proof}

\section{Complex M-polyfolds}\label{sec:complex}

Similarly to previous sections, we continue to assume that all sc-Banach spaces are over $\mathbb{R}$. 

Taking inspiration from~\cite[Definition 2.1]{lempert1998dolbeault}, we shall introduce the concept of a complex M-polyfold as an integrable, almost complex M-polyfold.   In Corollary~\ref{cor:complex}, we confirm the integrability of our almost complex structure presented in Theorem~\ref{thm:almostcomplexmpoly}, deriving our first family of complex M-polyfolds.  In Proposition~\ref{prop:complex}, we establish the existence of a rich structure in the space of sc-holomorphic functions on the complex M-polyfolds described in Corollary~\ref{cor:complex}. We conclude by constructing the complex M-polyfold shown in Figure~\ref{fig:complexC2Mpolyfold} on page~\pageref{fig:complexC2Mpolyfold}.

\medskip

A fundamental property associated with complex geometry is the integrability of an almost complex structure. However, determining how to define this property within the context of M-polyfolds is not immediately evident. We present a definition below, followed by a discussion on the topic.

\begin{definition}
\label{def:integrable}
 We say that $(X,J)$ is \emph{integrable} if 
$[\mu,\nu]$ is an sc-smooth local section of $T^{1,0}X^1$ for 
all sc-smooth local sections
$\mu$ and $\nu$ of $T^{1,0}X$.  If $(X,J)$ is integrable, then we say that $(X,J)$ is a 
\emph{complex M-polyfold}.
\end{definition}

In classical complex geometry, numerous equivalent notions exist to define integrability. One of these is the vanishing of the Nijenhuis tensor. The \emph{Nijenhuis tensor} $N_J: TX \oplus TX\rightarrow TX^1$, is defined as follows: Given $v,w\in T_p X$, $p\in X_2$, take any local vector fields $\mu,\nu$ such that $v=\mu_p$ and $w=\nu_p$. Then
\[
N_J(\mu_p,\nu_p):= [\mu,\nu]_p + J[\mu,J\nu]_p + J[J\mu,\nu]_p - [J\mu,J\nu]_p.
\]
Notice that the Nijenhuis tensor is only a  tensor field in the sense of Definition~\ref{def:tensorfield} when $X$ is finite-dimensional  since the codomain of $N_j$ is not equal to $TX$ when $X$ is infinite-dimensional. It follows by a direct generalization of the classical proof that integrability in the 
sense of Definition~\ref{def:integrable} is equivalent to $N_J =0$.

\begin{cor}
\label{cor:complex}
Let $X$ be an even-dimensional M-polyfold of class $\Gamma$ equipped with the almost complex structure $J$ of Theorem~\ref{thm:almostcomplexmpoly}. Then $(X,J)$ is a complex M-polyfold.
\end{cor}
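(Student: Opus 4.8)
The plan is to verify integrability directly from Definition~\ref{def:integrable}, exploiting two facts already extracted from the frame field $\{\mu_\ell\}_{\ell=1}^{n+k+1}$ in the proof of Theorem~\ref{thm:nonclosedsymplectic}: that $J$ permutes the frame up to sign (relation~\eqref{bula}) and that all frame brackets vanish (the claim~\eqref{claim}). First I would exhibit an explicit frame for the holomorphic tangent bundle. Keeping the notation of Theorem~\ref{thm:nonclosedsymplectic} and using the projection~\eqref{eq:varpihol}, I set
\[
Z_m:=\varpi_{1,0}\mu_{2m-1}=\tfrac12\big(\mu_{2m-1}-i\mu_{2m}\big),\qquad 1\le m\le\tfrac{n+k+1}{2}.
\]
From~\eqref{bula} a short computation gives $JZ_m=iZ_m$, so each $Z_m$ is an sc-smooth section of $T^{1,0}X$. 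Since $n$ is odd and $k$ is even, $n+k+1$ is even, the pairing $(\mu_{2m-1},\mu_{2m})$ exhausts the frame, and the $Z_m$ span $T_p^{1,0}X$ at every $p\in X_1$.

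The heart of the argument is then a one-line computation. Expanding the bracket $\mathbb{C}$-bilinearly and invoking~\eqref{claim} yields
\[
[Z_i,Z_j]=\tfrac14\big([\mu_{2i-1},\mu_{2j-1}]-i[\mu_{2i-1},\mu_{2j}]-i[\mu_{2i},\mu_{2j-1}]-[\mu_{2i},\mu_{2j}]\big)=0.
\]
For arbitrary sc-smooth local sections $\mu=\sum_i a_iZ_i$ and $\nu=\sum_j b_jZ_j$ of $T^{1,0}X$, with $\mathbb{C}$-valued sc-smooth coefficients, I would then apply the Leibniz rule for the sc-bracket~\eqref{lie} (which follows from the product rule for the sc-derivative $D$) to obtain
\[
[\mu,\nu]=\sum_{i,j}\Big(a_ib_j[Z_i,Z_j]+b_j(Z_ja_i)Z_i-a_i(Z_ib_j)Z_j\Big)=\sum_{i,j}\big(b_j(Z_ja_i)Z_i-a_i(Z_ib_j)Z_j\big).
\]
This is a function-linear combination of the $Z_\ell$, hence again a section of $T^{1,0}X$; since the bracket of sc-smooth vector fields lands in $TX^1$, it exhibits $[\mu,\nu]$ as an sc-smooth section of $T^{1,0}X^1$, which is precisely integrability in the sense of Definition~\ref{def:integrable}.

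The step I expect to require the most care is the analytic bookkeeping near the locus $\{t\le0\}$, where $\dim T_pX$ drops from $n+k+1$ to $n+1$ and the frame vectors $Z_m$ with $2m-1>n+1$ vanish: one must check that a local section of $T^{1,0}X$ still decomposes as $\sum_m a_mZ_m$ with sc-smooth coefficients across $\{t=0\}$, and that~\eqref{lie} genuinely obeys the Leibniz rule in the sc-category. A clean way to bypass the section-expansion is the Nijenhuis route: because $X=\Gamma^k_n$ is finite-dimensional, $N_J$ is a genuine tensor field (as noted after Definition~\ref{def:integrable}), so it suffices to compute $N_J(\mu_i,\mu_\ell)$; since $J$ permutes the frame up to sign and all frame brackets vanish by~\eqref{claim}, every such value is $0$, and pointwise spanning forces $N_J\equiv0$, which is equivalent to integrability.
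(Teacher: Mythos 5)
Your closing paragraph coincides with the paper's own proof: there, one verifies $N_J(\mu_i,\mu_\ell)=0$ for all $i,\ell$ directly from \eqref{bula} and \eqref{claim}, concludes $N_J\equiv 0$ because the $\mu_\ell(p)$ span $T_pX$ at every $p\in X_1$ and $N_J$ is pointwise multilinear, and then invokes the equivalence, stated after Definition~\ref{def:integrable}, between $N_J=0$ and integrability. So your ``clean way to bypass'' is not a bypass; it \emph{is} the intended argument, and as you state it, it is complete.

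The primary route you sketch contains the gap you yourself flag, and it is genuine rather than cosmetic. Definition~\ref{def:integrable} quantifies over \emph{all} sc-smooth local sections of $T^{1,0}X$, so the computation $[Z_i,Z_j]=0$ on the frame proves nothing by itself; the Leibniz step needs an arbitrary section written as $\sum_m a_m Z_m$ with coefficients that are at least sc$^1$, so that $Z_j a_i$ makes sense and \eqref{lie} applies. But $\{\mu_\ell\}$, hence $\{Z_m\}$, is not a frame in the honest sense: by \eqref{muuu} the fields $\mu_{n+1+j}$, $j=1,\dots,k$, vanish identically on $\{t\le 0\}$ (so at such points the nonzero $\mu_\ell(p)$ form a basis, while the full collection merely spans). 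Consequently the $Z_m$ with $2m-1>n+1$ are zero there, the coefficients $a_m$ are simply undetermined at those points, and nothing guarantees that the coefficients defined on $\{t>0\}$ --- whose differentiation in the $\mu_{n+1}$-direction brings in pairings against $\rho_{j,t}$ from \eqref{glas}, whose $L^2$-norm blows up like $e^{1/t}/t^2$ as $t\to 0^+$ --- extend sc$^1$ across the locus where $\dim T_pX$ drops from $n+k+1$ to $n+1$. The whole point of passing to $N_J$, in the paper and in your last paragraph, is that tensoriality converts the quantifier over sections into a pointwise check, for which spanning (not linear independence, not smooth decomposability) suffices. As a minor remark, the signs in your Leibniz identity are consistent with the paper's convention \eqref{lie} (which is the negative of the usual one), so that step is not an additional problem; only the decomposition is.
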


\begin{proof} We continue with the same notation as in the proof of  Theorem~\ref{thm:almostcomplexmpoly}. In particular, let  $X=\Gamma_n^k$ and let $J$ be the almost complex structure on $X$ there. Recall that the vector fields $\mu_\ell$ from \eqref{mu}, \eqref{muu}, and \eqref{muuu} form an sc-smooth frame field
of $TX$. By \eqref{bula} and \eqref{claim}, one verifies that  $N_J(\mu_i,\mu_\ell)=0$ for all $i,\ell$. 
Since $\{\mu_\ell\}$ is a frame field, we conclude that $N_J=0$, and so $(X,J)$ is a complex M-polyfold.
\end{proof}

\begin{definition}
\label{def:kahlermpolyfold}
Let $(X, J)$ be a complex M-polyfold. A \emph{weak (resp.\ strong) K\"{a}hler  structure} on $(X, J)$ is a pair $(g, \omega)$ given by a compatible weak (resp.\ strong) Riemannian metric $g$ and its induced symplectic form $\omega$. The quadruple $(X, J, g, \omega)$ is referred to as a \emph{weak (resp.\ strong) K\"{a}hler M-polyfold}.
\end{definition}

Let $(X, J, g, \omega)$ be as given by Theorem~\ref{thm:nonclosedsymplectic}. It follows from Theorem~\ref{thm:nonclosedsymplectic} and Corollary~\ref{cor:complex} that this quadruple forms a strong K\"{a}hler M-polyfold.

\begin{definition}\label{def:holom}
Let $X$ and $Y$ be almost complex M-polyfolds. Let $f:X\rightarrow Y$ be an sc$^1$-mapping. The \textit{complexified tangent map} $T_{\mathbb{C}}f$ is the tangent map as in Definition \ref{def: differential between Mpolyfolds} but $\mathbb{C}$-linearly extended to the complexified tangent bundles. Then, $f$ is said to be \emph{sc-pseudo-holomorphic} if
\[
 \varpi_{0,1}  \circ T_{\mathbb{C}} f \circ \varpi_{1,0} =0,
\]
where $\varpi_{1,0}$ and $\varpi_{0,1}$ are given in \eqref{eq:varpihol} and \eqref{eq:varpinatihol} respectively. If $X$ and $Y$ are complex M-polyfolds we say that $f$ is \emph{sc-holomorphic}.
\end{definition}

Notice that if $(X,J_X)$ and $(Y,J_Y)$ are almost complex M-polyfolds. Definition~\ref{def:holom} of sc-pseudo-holomorphic function is equivalent to saying that for $f\colon X\rightarrow Y$ satisfies
$$
Tf\circ J_X=J_Y\circ Tf.
$$
Equivalently, the following diagram commutes:
\[
\begin{tikzcd}
    TX \arrow[r,"Tf"] \arrow[d,"J_X"] & TY \arrow[d,"J_Y"]\\ TX \arrow[r,"Tf"] &TY
\end{tikzcd}
\]

Let $(X,J)$ be an almost complex M-polyfold and suppose that $f\colon X\to\mathbb{C}$ is
sc$^1$-smooth. In view of Example~\ref{d-exempel} we have $(\partial f)_p (Jv)=i(\partial f)_p (v)$ for all $p\in X_1$ and all $v\in T_pX$. Since $d_\mathbb{C}f=\partial f+\bar\partial f$ it follows that $d_\mathbb{C}f\circ J = i\cdot d_\mathbb{C} f$ if and only if
$\bar\partial f=0$. 
Thus, $f$ is sc-pseudo-holomorphic if and only if
$\bar\partial f=0$.

\begin{example}\label{embed}
Let $\iota \colon \Gamma_1^2 \hookrightarrow \mathbb{C}^2$ be the mapping  
defined by $\iota(r(x,t,f)) = (x+it, 0)$ when $t \leq 0$ and  
$$
\iota(r(x,t,f)) = \Big(x+it,\, \langle f, \gamma_1({\cdot}+e^{1/t})\rangle + i\langle f, \gamma_2({\cdot}+e^{1/t})\rangle\Big)
$$  
when $t > 0$.  
Clearly, $\iota$ is injective, and as in~\cite[Lemma 1.23]{hofer2010sc} one shows that it is sc$^1$-smooth.  
The differential of $\iota$ at $r(x,t,f)$ is the mapping  
$(\delta x, \delta t, \delta f + \xi_t \delta t) \mapsto (\delta x + i\delta t, 0)$ when $t \leq 0$ and  
$$
(\delta x, \delta t, \delta f + \xi_t \delta t) \mapsto \Big(\delta x + i\delta t,\, \langle \delta f, \gamma_1({\cdot}+e^{1/t})\rangle + i\langle \delta f, \gamma_2({\cdot}+e^{1/t})\rangle\Big)
$$  
when $t > 0$; cf.\ \eqref{sladdaren} and \eqref{sladden} for the notation. Since 
\[
\delta f \in\spann\{\gamma_1({\cdot}+e^{1/t}), \gamma_2({\cdot}+e^{1/t})\},
\]
it follows that the differential of $\iota$ is injective.  
Moreover, in view of \eqref{kota} and \eqref{sladdare},  
it follows that $T\iota \circ J = i \cdot T\iota$, where $J$ is the  
complex structure on $\Gamma_1^2$. Hence,  
$\iota$ is an sc-holomorphic embedding of  
$\Gamma_1^2$ into $\mathbb{C}^2$. The image is the set  
\begin{equation}\label{pennan}
\iota(\Gamma_1^2) = \{(z,w): \operatorname{Im} z \leq 0,\, w=0\} \cup \{(z,w): \operatorname{Im} z > 0\}.
\end{equation}
\qed
\end{example}

We will now present some fundamental properties of sc-holomorphic maps.

\begin{prop}\label{prop:complex}
\begin{enumerate}[$(i)$]\itemsep2mm
\item If $X$ is an even-dimensional M-polyfold of class $\Gamma$ equipped with the integrable almost complex structure of Theorem~\ref{thm:almostcomplexmpoly}, then
the vector space of sc-holomorphic functions $\phi: X \rightarrow \mathbb{C}$ is infinite-dimensional.

\item  Let $\iota: \Gamma_1^2 \hookrightarrow \mathbb{C}^2$ be the sc-holomorphic embedding defined in Example~\ref{embed}. Then, there exists an sc-holomorphic function $\phi: \Gamma_1^2 \to \mathbb{C}$ such that for any neighborhood $U$ of $0$ in $\mathbb{C}^2$, there is no holomorphic function $U \to \mathbb{C}$ that coincides with $\phi \circ \iota^{-1}$ on $U \cap \iota(\Gamma_1^2)$.

\item An sc-holomorphic function on a complex M-polyfold is not necessarily sc$^2$.
\end{enumerate}
\end{prop}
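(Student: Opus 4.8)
For part $(i)$, I would write $X=\Gamma_n^k$ with $n$ odd and $k$ even and keep the notation from the proof of Theorem~\ref{thm:almostcomplexmpoly}. By \eqref{bula} (equivalently \eqref{kota}) the structure $J$ pairs the last Euclidean coordinate $x_n$ with $t$, so that on the $(x_n,t)$-plane $J$ is the standard complex structure and $z:=x_n+it$ is a holomorphic coordinate. The plan is to use the powers $z^m$, $m\ge 0$. Precomposing with the retraction $r$ of \eqref{lunch} gives $z^m\circ r=(x_n+it)^m$, which is independent of $f$ and polynomial in the finite-dimensional variables, so each $z^m$ is sc-smooth; the one-variable computation that gives $\bar\partial z=0$ yields $\bar\partial z^m=mz^{m-1}\bar\partial z=0$, so each $z^m$ is sc-holomorphic. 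Restricting to the curve $t\mapsto r(0,\dots,0,t,0)$, along which $z=it$, shows that the $z^m$ are linearly independent, and hence the space of sc-holomorphic functions $X\to\mathbb{C}$ is infinite-dimensional.

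Parts $(ii)$ and $(iii)$ both hinge on the second component $w$ of the embedding $\iota$ of Example~\ref{embed}, with $w\circ r(x,t,f)=\langle f,\gamma_1(\cdot+e^{1/t})\rangle_{L^2}+i\langle f,\gamma_2(\cdot+e^{1/t})\rangle_{L^2}$ for $t>0$ and $w\equiv 0$ for $t\le 0$. Two opposing estimates are the heart of the matter. If $f$ lies in a higher scale $E_m$, $m\ge 1$, then, because the weight in \eqref{eq:constructionscbanachspace} forces $f$ to decay at $-\infty$ while $\gamma_j(\cdot+e^{1/t})$ concentrates near $s=-e^{1/t}$, the pairings $\langle f,\gamma_j(\cdot+e^{1/t})\rangle_{L^2}$ and all their $t$-derivatives decay double-exponentially in $1/t$ as $t\to 0^+$; this beats the merely exponential growth of $\tfrac{d}{dt}e^{1/t}=-e^{1/t}/t^2$ and its iterates. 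By contrast, since $\delta_0=0$, a level-zero vector $\delta f\in E_0=L^2$ carries no decay, so the mixed expression $\partial_t\langle\delta f,\gamma_j(\cdot+e^{1/t})\rangle_{L^2}=-\tfrac{e^{1/t}}{t^2}\langle\delta f,\dot\gamma_j(\cdot+e^{1/t})\rangle_{L^2}$ is unbounded as $t\to 0^+$. For part $(iii)$ I would then take $\phi=w$: it is sc$^1$ and satisfies $\bar\partial w=0$ (being $\pi_2\circ\iota$ with $\pi_2$ linear), hence sc-holomorphic; but the mixed second derivative of $w\circ r$ just displayed has, as a functional of $\delta f\in E_0$, operator norm $\tfrac{e^{1/t}}{t^2}\|\dot\gamma_j\|_{L^2}\to\infty$ as $t\to 0^+$. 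Since an sc$^2$ map has second derivatives that are bounded and vary sc$^0$-continuously, this blow-up at the seam $t=0$ shows that $Tw$ is not sc$^1$, i.e.\ $w$ is not sc$^2$.

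For part $(ii)$, on $\Gamma_1^2$ I would set $\phi:=c(z)\,w$ with $c(z):=e^{-i/z}$, which is bounded and holomorphic on $\{\operatorname{Im}z>0\}$ but has an essential singularity at $z=0$. On $\{t>0\}$ the function $\phi$ is a product of the sc-holomorphic functions $c(z)$ and $w$, so $\bar\partial\phi=0$; the double-exponential decay of $w$ dominates the at-most-polynomial blow-up of $c$ and $c'$ near $z=0$, so $\phi$ and its first tangent map extend by $0$ across $\{t\le 0\}$ and $\phi$ is sc$^1$, hence sc-holomorphic. Since $\iota(\Gamma_1^2)\cap\{\operatorname{Im}z>0\}=\{\operatorname{Im}z>0\}$ by \eqref{pennan}, we have $\phi\circ\iota^{-1}(z,w)=e^{-i/z}w$ on an open subset of $\mathbb{C}^2$. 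If a holomorphic $F$ on a neighborhood $U$ of $0$ agreed with $\phi\circ\iota^{-1}$ on $U\cap\iota(\Gamma_1^2)$, then $\partial_w F(z,0)$ would be holomorphic near $0$ and coincide with $e^{-i/z}$ on $\{\operatorname{Im}z>0\}$, contradicting the essential singularity; hence no such $F$ exists.

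The main obstacle throughout is the scale-regularity bookkeeping at the seam $t=0$: everything rests on making quantitative, in the weighted norms of \eqref{eq:constructionscbanachspace}, the competition between the double-exponential decay of $\langle f,\gamma_j(\cdot+e^{1/t})\rangle_{L^2}$ for $f$ in higher scales and the exponential blow-up of the $t$-derivatives of $e^{1/t}$, together with the failure of that decay for level-zero tangent vectors. Carrying out these estimates carefully, in the spirit of \cite[Lemma~1.23]{hofer2010sc}, is where the real work lies.
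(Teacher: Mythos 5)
Your part $(i)$ is essentially the paper's own proof: the paper likewise shows that $p\mapsto x_n+it$ is sc-holomorphic and composes it with holomorphic functions of one variable (all of $\mathcal{O}(\mathbb{C})$ rather than just the monomials $z^m$), so here you agree.

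Part $(ii)$ is correct but uses a different function. The paper takes $\psi(z,w)=z^2w^2\log (z)$ (branch cut on the negative imaginary axis) and $\phi=\psi\circ\iota$; the obstruction to extension is the monodromy of $\log z$, and the sc$^1$ verification at the seam is softer because the vanishing at $z=0$ is built into the factor $z^2w^2$. Your $\phi=e^{-i/z}w$ also works: $|e^{-i/z}|\le 1$ and $|\frac{d}{dz}e^{-i/z}|\le |z|^{-2}$ on the closed upper half-plane minus the origin, so your argument leans more heavily on the quantitative fact that convergence of base points at level $1$ forces $|w|\lesssim e^{-\delta_1 e^{1/t}/2}\|f\|_{W^{1,\delta_1}}$, which beats the polynomial blow-up; and your non-extension argument via $\partial_w F(z,0)$, the identity theorem on a punctured disc, and the essential singularity of $e^{-i/z}$ is valid, given \eqref{pennan}. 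Both you and the paper defer the seam estimates to arguments in the spirit of \cite[Lemma 1.23]{hofer2010sc}, so this is an acceptable alternative route.

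Part $(iii)$ contains a genuine error. Your criterion --- ``the mixed second derivative $\delta f\mapsto\partial_t\langle\delta f,\gamma_j(\cdot+e^{1/t})\rangle_{L^2}$ has $E_0$-operator norm $\tfrac{e^{1/t}}{t^2}\|\dot\gamma_j\|_{L^2}\to\infty$, hence $Tw$ is not sc$^1$'' --- is not what sc$^2$ requires. Unwinding Definition~\ref{def:scdiff} twice: in $T^2w$ the lowest-level fiber vector $h_2\in E_0$ enters only through the \emph{first} derivative term $Dw(p)h_2$, while $D^2w(p)[\cdot,\cdot]$ is only ever evaluated on pairs of vectors of level at least $1$, and the difference quotients for $Tw$ at points of $(TU)_1=U_2\oplus E_1$ are measured against the level-shifted norm on $E_2\oplus E_1$. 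For a level-$1$ vector $g$ the weight in \eqref{eq:constructionscbanachspace} gives $|\langle g,\rho_{j,t}\rangle_{L^2}|\lesssim \tfrac{e^{1/t}}{t^2}e^{-\delta_1 e^{1/t}/2}\|g\|_{E_1}\to 0$, so the blow-up you exhibit never occurs in any of the conditions defining sc$^2$; this level shift is precisely what distinguishes scale calculus from Fr\'echet calculus. A decisive reductio: the retraction $r$ of \eqref{lunch} has exactly the same mixed second derivative $\delta t\,\delta f\mapsto \sum_j\langle\delta f,\rho_{j,t}\rangle\gamma_j(\cdot+e^{1/t})+\langle\delta f,\gamma_j(\cdot+e^{1/t})\rangle\rho_{j,t}$, whose $E_0$-operator norm blows up identically, yet $r$ is sc-smooth --- this is the foundation of the whole construction of $\Gamma_n^k$. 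So your criterion would disprove the sc-smoothness of $r$ itself.

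Consequently your proposed counterexample fails: $w\circ r=\langle f,\gamma_1(\cdot+e^{1/t})\rangle_{L^2}+i\langle f,\gamma_2(\cdot+e^{1/t})\rangle_{L^2}$ is just a coefficient of the moving projection $\pi^2_t$, and the same estimates behind \cite[Lemma 1.23]{hofer2010sc} show that $w$ is sc$^2$ (indeed sc-smooth); Example~\ref{embed} asserts only sc$^1$ for $\iota$ because that is all that sc-holomorphy needs, not because more regularity fails. A counterexample for $(iii)$ must contain an actual analytic singularity, which is why the paper reuses its function from $(ii)$: there the $\log(z)$ factor is what produces the discontinuous second derivative invoked in the paper's proof. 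Your $w$ has no singularity at all, so your argument for $(iii)$ collapses.
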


\begin{proof} $(i)$: Once again we consider $(X,J)$ as in Theorem~\ref{thm:almostcomplexmpoly}. Consider the points $p=r(x_1,\dots,x_n,t,f)\in X$, cf.\ \eqref{lunch}. Let $\mathcal{O}(\mathbb{C})$ denote the set of all holomorphic functions on $\mathbb{C}$. Then for any $\psi\in \mathcal{O}(\mathbb{C})$ the map
\[
\phi: X\rightarrow \mathbb{C}
\]
defined by
\[
\phi(p)= \psi(x_n +it)
\]
is sc-holomorphic. In fact, since $\psi$ is holomorphic, it is sufficient to check that the map
\[
\pi: X \rightarrow \mathbb{C}, \qquad
\pi(p)=x_n +it,
\]
is sc-holomorphic. It is immediate that $\pi$ is sc$^1$ by \cite[Proposition 2.1]{hofer2010sc},
and it is straightforward to check that 
$$
(T\pi)_p(J w_p) = i (T\pi)_p (w_p)
$$
for all $w_p \in T_p X$, $p\in X_1$.
Thus $\pi$ is sc-holomorphic. Since $\mathcal{O}(\mathbb{C})$ is infinite-dimensional, the space of sc-holomorphic mappings
$X\to \mathbb{C}$ is infinite-dimensional.

\medskip

$(ii)$: Let $\log$ be the complex logarithm with branch cut along the negative imaginary axis, let
$
\psi(z,w)=z^2w^2\log(z),
$
and define 
$$
\phi\colon\Gamma_1^2\to\mathbb{C},\qquad
\phi=\psi\circ \iota.
$$
Then $\phi$ is sc$^1$ in view of 
\cite[Lemma 1.23]{hofer2010sc}. Since $\iota$ is sc-holomorphic and $\psi$ is holomorphic when 
$\im(z)>0$ it follows by the chain rule that
$T\phi\circ J=i\cdot T\phi$ when $t>0$. If $t\leq 0$, then 
$\phi=0$ and thus $T\phi\circ J=i\cdot T\phi$ also when
$t\leq 0$.

Hence, $\phi$ is sc-holomorphic. 
However, cf.~\eqref{pennan},
$$
\phi\circ \iota^{-1}(z,w)=
\begin{cases}
z^2w^2\log(z), & (z,w)\in \iota(\Gamma_1^2), \,\,\im(z)>0\\
0, & (z,w)\in \iota(\Gamma_1^2),\,\, \im(z)\leq 0
\end{cases},
$$
which clearly has no holomorphic extension to any neighborhood of $0$ in $\mathbb{C}^2$.

$(iii)$: The statement follows as a consequence of the proof of $(ii)$, by noticing that the map $\phi$ has discontinuous second derivative.
\end{proof}

If we in the proof of (2) take $\psi$ to be holomorphic in $U\subset\mathbb{C}^2$, then $\phi=\psi\circ \iota$ is sc-holomorphic in $\iota^{-1}(U)$.
Thus, if we identify 
$\Gamma_1^2$ with its image 
$\iota(\Gamma_1^2)\subset\mathbb{C}^2$, then the stalk at $0$ of
the sheaf of germs of sc-holomorphic functions on
$\Gamma_1^2$ contains the stalk $\mathcal{O}_0(\mathbb{C}^2)$. By part (2) of the proposition this containment is strict.

\smallskip

Thus far, the complex M-polyfolds under consideration have been local in the sense that they are sc-smooth retracts. To extend our understanding, it is natural to ask whether it is possible to glue two complex M-polyfolds of class~$\Gamma$ in a non-trivial way so as to obtain a global complex M-polyfold. The following example shows that this is indeed possible.

\begin{example}\label{ex:complexC2Mpolyfold}
Consider the following subsets of $\mathbb{C}^2$,
\[
C=\{(z,w)\in \mathbb{C}^2: |z|<1\},
\]
and
\[
P=\{(z,w)\in \mathbb{C}^2: w=0\}.
\]
We define $X=C\cup P$ with the complex structure inherited from $\mathbb{C}^2$. We shall now verify that $X$ is a complex M-polyfold.  An illustration of the slice $\im(w)=0$ of $X$ is provided in Figure~\ref{fig:complexC2Mpolyfold} on page~\pageref{fig:complexC2Mpolyfold}. 

To see this, we cover $X$ by $U=X\setminus \{(1,0)\}$ and $V=X\setminus \{(-1,0)\}$ and show that there are bijective mappings $\Phi\colon U\to \Gamma_1^2$ and $\Psi\colon V\to\Gamma_1^2$ such that $\Phi\circ \Psi^{-1}$ and $\Psi\circ \Phi^{-1}$ are sc-holomorphic. It then follows that $X$ is a complex M-polyfold as in Definition~\ref{def:integrable}; this is the trivial direction of the 
Newlander-Nirenberg theorem.

Let $\phi(z)=(1+z)/(1-z)=\tau(z)+i\sigma(z)$, where 
$\tau=\text{Re}\, \phi$ and $\sigma=\im\, \phi$. This is a M\"{o}bius transformation that maps
$\{|z|<1\}$ onto the right half-plane, $\{|z|=1\}\setminus \{1\}$ onto the imaginary axis, and 
$\{|z|>1\}$ onto the left half-plane. Let $\Phi\colon U\to\Gamma_1^2,$ be defined by
$$
\Phi(z,w)=
\big(-\sigma(z),\tau(z),\Real(w)\gamma_1(\cdot+e^{1/\tau(z)})+\im(w)\gamma_2(\cdot+e^{1/\tau(z)})\big),
$$
where $\Real(w)\gamma_1(\cdot+e^{1/\tau(z)})+\im(w)\gamma_2(\cdot+e^{1/\tau(z)})$ is understood to be $0$
when $w=0$. It is straightforward to check that $\Phi$ is a bijection. Moreover, using that
$\tau$ and $\sigma$ satisfy the Cauchy-Riemann equations one checks that
$$
D\Phi\circ i=J\circ D\Phi,
$$
where $J$ is given by \eqref{kota} and \eqref{sladdare}. Thus $J$ is compatible with the complex structure 
on $U$ induced by the one on $\mathbb{C}^2$. 

Similarly, let $\psi(z)=(1-z)/(1+z)=\phi(-z)$, which is a M\"{o}bius transformation mapping $\{|z|<1\}$ onto
the right half-plane, $\{|z|=1\}\setminus \{-1\}$ onto the imaginary axis, and $\{|z|>1\}$ onto the left
half-plane. Define $\Psi\colon V\to\Gamma_1^2$ by $\Psi(z,w)=\Phi(-z,w)$. Then $\Psi$ is a bijection
making the complex structures on $V$ and $\Gamma_1^2$ compatible.

If $f\in\text{span}\{\gamma_1(\cdot+e^{1/t}),\gamma_2(\cdot+e^{1/t})\}$,  a calculation shows that
$$
\Phi\circ\Psi^{-1}(x,t,f)
=(-\sigma(x,t),\tau(x,t),F(x,t,f)),
$$
where
\begin{align}\label{kylling}
\sigma(x,t)=\sigma\circ\psi^{-1}(t-ix)=x/(t^2+x^2),\\
\tau(x,t)=\tau\circ\psi^{-1}(t-ix)=t/(t^2+x^2),\label{broiler}
\end{align}
and
$$
F(x,t,f)=\big\langle f,\gamma_1(\cdot+e^{1/t})\big\rangle\gamma_1(\cdot+e^{1/\tau(x,t)})
+\big\langle f,\gamma_2(\cdot+e^{1/t})\big\rangle\gamma_2(\cdot+e^{1/\tau(x,t)}).
$$
One shows that $\Phi\circ\Psi^{-1}$ is sc$^1$ in a similar way as in~\cite[Lemma 1.23]{hofer2010sc}. 
In the same way, $\Psi\circ\Phi^{-1}$ is seen to be sc$^1$.

It remains to see that $D(\Phi\circ\Psi^{-1})$ and $D(\Psi\circ\Phi^{-1})$ commute with the complex structures.
Recall that the tangent space of $\Gamma_1^2$ at $(x,t,f)$ is the set of all vectors of the 
form \eqref{sladden} (with $n=1$), where $\xi_t=\xi(t,f)$ is given by \eqref{sladdaren}.
If $\delta f\in\text{span}\{\gamma_1(\cdot+e^{1/t}),\gamma_2(\cdot+e^{1/t})\}$,
a calculation gives
\begin{equation*}
D(\Phi\circ\Psi^{-1})_{(x,t,f)}(\delta x,\delta t,\delta f + \xi(t,f)\delta t)=
 \big(-\delta \sigma, \delta\tau, F(x,t,\delta f)+\xi(\tau,F)\delta\tau\big),
\end{equation*}
where $\tau=\tau(x,t)$, $F=F(x,t,f)$, and
$$
\delta\sigma = \delta x\frac{\partial\sigma(x,t)}{\partial x}+\delta t\frac{\partial \sigma(x,t)}{\partial t},\qquad
\delta\tau=\delta x\frac{\partial\tau(x,t)}{\partial x}+\delta y\frac{\partial \tau(x,t)}{\partial t}.
$$
Now, by~\eqref{kylling} and~\eqref{broiler} we get
$$
\frac{\partial\sigma}{\partial x}=-\frac{\partial\tau}{\partial t},\qquad
\frac{\partial\sigma}{\partial t}=\frac{\partial\tau}{\partial x}.
$$
In view of \eqref{kota} and \eqref{sladdare} one  then checks that 
$J\circ D(\Phi\circ\Psi^{-1})=D(\Phi\circ\Psi^{-1})\circ J$. 
In the same way it follows that $J\circ D(\Psi\circ\Phi^{-1})=D(\Psi\circ\Phi^{-1})\circ J$. Hence,
$\Phi\circ\Psi^{-1}$ and $\Psi\circ\Phi^{-1}$ are sc-holomorphic and so $X$ is a complex M-polyfold. \qed
\end{example}

\bigskip

 In Example~\ref{ex:complexC2Mpolyfold}, the trivial direction of the Newlander-Nirenberg theorem was used. However, it is worth noting that while the Newlander-Nirenberg theorem holds for almost complex manifolds,  its non-trivial direction does not generalize to almost complex Banach 
manifolds (\hspace{1sp}\cite{patyi2000overline}). Therefore, assuming that the theorem holds in the M-polyfold case may be overly optimistic. However, the theorem may hold under certain additional assumptions, such as the finiteness of the dimension of $X$ as defined in Definition~\ref{def:dimofmpolyfold}.

\begin{conjecture}[Newlander-Nirenberg Theorem]
Let $(X,J)$ be an almost complex M-polyfold which is locally sc-diffeomorphic, at every point $p$, to some $\Gamma^{k(p)}_{n(p)}$  equipped with the complex structure defined in~\eqref{kota}, and~\eqref{sladdare}. Then, $X$ is a complex M-polyfold, in the sense of Definition \ref{def:integrable}, if and only if there is a sub-atlas of $X$ such that the transition functions between all charts (whenever they are defined) are sc-holomorphic.
\end{conjecture}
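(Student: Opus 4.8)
The plan is to establish the two implications separately; the reverse implication is routine, while the forward one carries the genuine Newlander--Nirenberg content and is where the difficulty lies.

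For the reverse implication, suppose $X$ admits a sub-atlas $\{(V_\alpha,\psi_\alpha,\Gamma^{k_\alpha}_{n_\alpha})\}$ whose transition maps are sc-holomorphic and whose charts intertwine $J$ with the model structure $J_{\mathrm{std}}$ of \eqref{kota} and \eqref{sladdare} (this compatibility is exactly what renders the transitions sc-holomorphic). On each $V_\alpha$ the almost complex M-polyfold $(X,J)$ is then sc-diffeomorphic, as an almost complex M-polyfold, to $(\Gamma^{k_\alpha}_{n_\alpha},J_{\mathrm{std}})$, which is integrable by Corollary~\ref{cor:complex}. Since an sc-holomorphic sc-diffeomorphism carries local sections of $T^{1,0}$ to local sections of $T^{1,0}$ and the Lie bracket \eqref{lie} is natural, the defining condition of Definition~\ref{def:integrable} transports back to $(V_\alpha,J)$; as integrability is a local condition, $(X,J)$ is integrable. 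Equivalently, one verifies $N_J=0$ chart by chart.

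For the forward implication, fix $p\in X$ and a modeling chart $\phi\colon U\to\Gamma^k_n$, and push $J$ forward to an integrable almost complex structure $J'=\phi_*J$ on $\Gamma^k_n$. The task reduces to producing, near $\phi(p)$, an sc-diffeomorphism $\Theta$ of $\Gamma^k_n$ with $T\Theta\circ J'=J_{\mathrm{std}}\circ T\Theta$; then $\Theta\circ\phi$ straightens $J$ to $J_{\mathrm{std}}$, and between two such straightening charts the transition commutes with $J_{\mathrm{std}}$ and is therefore sc-pseudo-holomorphic, hence sc-holomorphic by the remark following Definition~\ref{def:holom}. The decisive structural feature to exploit is that $\Gamma^k_n$ is finite-dimensional on each of the open strata $\{t>0\}$ and $\{t<0\}$, where by \eqref{lunch} and \eqref{gammank} it is an ordinary smooth manifold of real dimension $n+1+k$, respectively $n+1$, and the sc-smooth structure reduces to the classical one. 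On each stratum $J'$ is thus an ordinary integrable almost complex structure, and the classical Newlander--Nirenberg theorem supplies holomorphic coordinates $\Theta_+$ on $\{t>0\}$ and $\Theta_-$ on $\{t<0\}$.

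The main obstacle is to amalgamate $\Theta_+$ and $\Theta_-$ into a single map that is sc-smooth across the dimension-jump locus $\{t=0\}$. The classical theorem produces coordinates on each stratum with no control whatsoever on their behavior as $t\to0^\pm$, whereas sc-smoothness is governed by the weighted Sobolev scales \eqref{eq:constructionscbanachspace}: along the fiber directions the translates $\gamma_j(\cdot+e^{1/t})$ have bounded $E_0$-norm but higher-scale norms that blow up as $t\to0^+$ (since $e^{1/t}\to\infty$ drives the supports into the region where the weight $e^{\delta_m s\beta(s)}$ is large), which is precisely the mechanism collapsing the $k$ fiber directions at $t=0$. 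Any proof must therefore refine the construction on $\{t>0\}$ so that the resulting holomorphic coordinates respect the retraction $r$ of \eqref{lunch}, degenerate compatibly with this scale collapse, and extend sc-smoothly across $\{t=0\}$ to match $\Theta_-$. It is exactly at this interface that new quantitative estimates---rather than a mere appeal to the finite-dimensional theorem---are needed, and exactly here that the Banach-manifold counterexamples of~\cite{patyi2000overline} counsel caution; controlling this sc-regularity across the change of dimension is, in our view, the crux of the conjecture.
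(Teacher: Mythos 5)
First, a structural point: the statement you were asked to prove is stated in the paper as a \emph{conjecture}; the paper contains no proof of it, and indeed explicitly warns (citing the Banach-manifold counterexamples of \cite{patyi2000overline}) that assuming the nontrivial direction holds ``may be overly optimistic.'' So there is no proof of record to compare yours against. Your treatment of the reverse implication is sound and matches what the paper itself uses: transporting integrability through sc-biholomorphic charts onto the integrable models $(\Gamma^k_n,J_{\mathrm{std}})$ of Corollary~\ref{cor:complex} is exactly the ``trivial direction of the Newlander-Nirenberg theorem'' invoked in Example~\ref{ex:complexC2Mpolyfold}. You were also right to build in the hypothesis that the charts of the sub-atlas intertwine $J$ with $J_{\mathrm{std}}$: as literally stated, sc-holomorphy of the transition maps only says they commute with $J_{\mathrm{std}}$, which by itself produces \emph{some} complex structure on $X$ by pullback but does not tie it to the given $J$; this compatibility must be (and in the intended reading is) part of the statement.

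The genuine gap is the forward implication, and you know it: what you give there is a reduction, not a proof. Pushing $J$ forward to $J'$ on $\Gamma^k_n$, applying the classical Newlander-Nirenberg theorem on the finite-dimensional open strata $\{t>0\}$ (dimension $n+1+k$) and $\{t<0\}$ (dimension $n+1$), and then needing to match the two straightenings sc-smoothly across the dimension-jump locus $\{t=0\}$ is a correct and well-motivated plan; your description of the analytic mechanism is also accurate (the $E_m$-norms, $m\geq 1$, of $\gamma_j(\cdot+e^{1/t})$ blow up as $t\to 0^{+}$ because the supports are pushed into the region where the weight in \eqref{eq:constructionscbanachspace} grows, which is what collapses the $k$ fiber directions). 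But the step you label ``the crux''---producing holomorphic coordinates on $\{t>0\}$ that respect the retraction \eqref{lunch} and extend sc-smoothly across $\{t=0\}$ to match $\Theta_-$---is precisely the open content of the conjecture, and you supply no construction or estimate for it. The classical theorem gives no control whatsoever on its coordinates near $\{t=0\}$, so nothing in your argument rules out that they oscillate or degenerate in a way incompatible with the weighted scales; this is exactly the kind of failure the Banach-space pathology of \cite{patyi2000overline} makes plausible. As a review, then: your reverse direction is essentially complete; your forward direction is an honest research program whose missing step coincides with the reason the authors left the statement as a conjecture rather than a theorem.
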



\begin{thebibliography}{99}

\bibitem{AbbondandoloSchlenk} Abbondandolo A., Schlenk F., Floer homologies, with applications. Jahresber. Dtsch. Math.-Ver. 121 (2019), no. 3, 155-238.

\bibitem{Cartan1986}  Cartan H., Sur les r\'{e}tractions d'une vari\'{e}t\'{e}. C. R. Acad. Sci. Paris S\'{e}r. I Math. 303 (1986), no. 14, 715.

\bibitem{EliashbergGiventalHofer}  Eliashberg Y., Givental A., Hofer H., Introduction to symplectic field theory, Geom. Funct. Anal., Special Volume, Part II, 2000, 560–673.


\bibitem{fabert2016polyfolds}  Fabert O., Fish J. W., Golovko R., Wehrheim K., Polyfolds: a first and second look. EMS Surv. Math. Sci. 3 (2016), no. 2, 131-208.

\bibitem{falcitelli1994almost}  Falcitelli M., Farinola A., Salamon S., Almost-Hermitian geometry. Differential Geom. Appl. 4 (1994), no. 3, 259-282.

\bibitem{Floer86} Floer A., Proof of the Arnol'd conjecture for surfaces and generalizations to certain K\"{a}hler  manifolds.
Duke Math. J. 53 (1986), no. 1, 1–32.

\bibitem{Floer89} Floer A., Symplectic fixed points and holomorphic spheres. Comm. Math. Phys. 120 (1989), no. 4, 575–611.

\bibitem{Fukaya} Fukaya K., Morse homotopy, $A^{\infty}$-category, and Floer homologies. Proceedings of GARC Workshop on Geometry
and Topology '93 (Seoul, 1993), 1–102, Lecture Notes Ser., 18, Seoul Nat. Univ., Seoul, 1993.

\bibitem{FukayaOno} Fukaya K., Ono K., Arnold conjecture and Gromov-Witten invariant. Topology 38 (1999), no. 5, 933–1048.

\bibitem{Fukaya2020} Fukaya K., Oh, Y.-G., Ohta H., Ono K., Kuranishi structures and virtual fundamental chains. Springer Monographs in Mathematics. Springer, Singapore, [2020], $\copyright$2020.


\bibitem{Gromov1985} Gromov M., Pseudo holomorphic curves in symplectic manifolds. Invent. Math. 82 (1985), no. 2, 307-347.

\bibitem{gol2009weighted} Gol'dshtein V., Ukhlov, A., Weighted Sobolev spaces and embedding theorems. Trans. Amer. Math. Soc. 361 (2009), no. 7, 3829-3850.

\bibitem{hofer2017polyfoldslecturesongeometry} Hofer H., Polyfolds and Fredholm theory. Lectures on geometry, 87-158, Clay Lect. Notes, Oxford Univ. Press, Oxford, 2017.

\bibitem{hofer2007general1} Hofer H., Wysocki K., Zehnder E., A general Fredholm theory. I. A splicing-based differential geometry. J. Eur. Math. Soc. (JEMS) 9 (2007), no. 4, 841-876.

\bibitem{hofer2009general2} Hofer H., Wysocki K., Zehnder E., A general Fredholm theory. II. Implicit function theorems. Geom. Funct. Anal. 19 (2009), no. 1, 206-293.

\bibitem{hofer2009general3} Hofer H., Wysocki K., Zehnder E., A general Fredholm theory. III. Fredholm functors and polyfolds. Geom. Topol. 13 (2009), no. 4, 2279-2387.

\bibitem{hofer2010integrationonzerosets} Hofer H., Wysocki K., Zehnder E., Integration theory on the zero sets of polyfold Fredholm sections. Math. Ann. 346 (2010), no. 1, 139-198.

\bibitem{hofer2010sc} Hofer H., Wysocki K., Zehnder E., sc-smoothness, retractions and new models for smooth spaces. Discrete Contin. Dyn. Syst. 28 (2010), no. 2, 665-788.

\bibitem{hofer2017polyfold} Hofer H., Wysocki K., Zehnder E., Polyfold and Fredholm theory. Ergebnisse der Mathematik und ihrer Grenzgebiete. 3. Folge. A Series of Modern Surveys in Mathematics [Results in Mathematics and Related Areas. 3rd Series. A Series of Modern Surveys in Mathematics], 72. Springer, Cham, [2021], $\copyright$2021.

\bibitem{holub1972compactness} Holub J.R, Compactness in topological tensor products and operator spaces. Proc. Amer. Math. Soc. 36 (1972), 398–406.

\bibitem{lempert1998dolbeault} Lempert L., The Dolbeault complex in infinite dimensions. I. J. Amer. Math. Soc. 11 (1998), no. 3, 485-520.

\bibitem{lempert1999dolbeault} Lempert L., The Dolbeault complex in infinite dimensions. II. J. Amer. Math. Soc. 12 (1999), no. 3, 775-793.

\bibitem{lempert2000dolbeault} Lempert L., The Dolbeault complex in infinite dimensions. III. Sheaf cohomology in Banach spaces. Invent. Math. 142 (2000), no. 3, 579-603.

\bibitem{nadler1967differentiable} Nadler S. B., Differentiable retractions in Banach spaces. Tohoku Math. J. (2) 19 (1967), 400-405.

\bibitem{patyi2000overline} Patyi I., On the $\bar{\partial}$-equation in a Banach space. Bull. Soc. Math. France 128 (2000), no. 3, 391-406.

\bibitem{patyisimon2009} Patyi I, Simon S. B., On real analytic Banach manifolds. J. Lond. Math. Soc. (2) 80 (2009), no. 2, 375-387.

\bibitem{Ryan2002introduction} Ryan R. A., Introduction to  tensor products of Banach spaces. Vol. 73. London: Springer (2002).

\bibitem{simon2009} Simon S., A Dolbeault isomorphism theorem in infinite dimensions. Trans. Amer. Math. Soc. 361 (2009), no. 1, 87-101.


\bibitem{francois} Tr\`{e}ves F., Topological vector spaces, distributions and kernels. Academic Press, New York-London, 1967. 

\bibitem{Weber} Weber J., Scale calculus and M-polyfolds - an introduction. $32^\circ$ Col\'{o}quio Brasileiro de Matem\'{a}tica. Instituto Nacional de Matem\'{a}tica Pura e Aplicada (IMPA), Rio de Janeiro, [2019], $\copyright$2019. 


\end{thebibliography}
\end{document}